\newcommand{\ipc}{{\sf IPC}}
\newcommand{\iqc}{{\sf IQC}}
\newcommand{\qsfour}{{\sf QS4}}
\newcommand{\sfour}{{\sf S4}}
\newcommand{\sfive}{{\sf S5}}
\newcommand{\st}{{\sf S4.t}}
\newcommand{\qst}{{\sf QS4.t}}
\newcommand{\qost}{{\sf Q^\circ S4.t}}
\newcommand{\boxp}{\Box_P}
\newcommand{\boxf}{\Box_F}
\newcommand{\diap}{\Diamond_P}
\newcommand{\diaf}{\Diamond_F}
\newcommand{\cbff}{{\sf CBF_F}}
\newcommand{\cbfp}{{\sf CBF_P}}
\newcommand{\bff}{{\sf BF_F}}
\newcommand{\bfp}{{\sf BF_P}}
\newcommand{\cbarc}{{\sf CBF}}
\newcommand{\barc}{{\sf BF}}
\newcommand{\ui}{{\sf UI}}
\newcommand{\uio}{{\sf UI^\circ}}
\newcommand{\nid}{{\sf NID}}
\newcommand{\qk}{{\sf QK}}
\newcommand{\qok}{{\sf Q^\circ K}}
\newtheorem{theorem}{Theorem}[section]
\newtheorem*{theorem*}{Theorem}
\newtheorem{lemma}[theorem]{Lemma}
\newtheorem{proposition}[theorem]{Proposition}
\newtheorem*{proposition*}{Proposition}
\newtheorem*{corollary*}{Corollary}
\theoremstyle{definition}
\newtheorem{definition}[theorem]{Definition}
\newtheorem*{definition*}{Definition}
\newtheorem*{example*}{Example}
\newtheorem{remark}[theorem]{Remark}
\newtheorem*{remark*}{Remark}
\begin{document}

\title{Temporal interpretation of intuitionistic quantifiers}

\author[G. Bezhanishvili]{Guram Bezhanishvili}
\address{Department of Mathematical Sciences\\
New Mexico State University\\
Las Cruces NM 88003\\
USA}
\email{guram@nmsu.edu}

\author[L. Carai]{Luca Carai}
\address{Department of Mathematical Sciences\\
New Mexico State University\\
Las Cruces NM 88003\\
USA}
\email{lcarai@nmsu.edu}

\thanks{Acknowledgment: We would like to thank the reviewers whose comments have improved the presentation of the paper. }

\keywords{Intuitionistic quantifiers, temporal interpretation, G\"odel translation.}

  \begin{abstract}
We show that intuitionistic quantifiers admit the following temporal interpretation: $\forall x A$ is true at a world $w$ iff $A$ is true at every object in the domain of every future world, and $\exists x A$ is true at $w$ iff $A$ is true at some object in the domain of some past world. For this purpose we work with a predicate version of the well-known tense propositional logic $\sf S4.t$. The predicate logic $\sf Q^\circ S4.t$ is obtained by weakening the axioms of the standard predicate extension $\sf QS4.t$ of $\sf S4.t$ along the lines Corsi weakened $\sf QK$ to $\sf Q^\circ K$. The G\"odel translation embeds the predicate intuitionistic logic $\sf IQC$ into $\sf QS4$ fully and faithfully. We provide a temporal version of the G\"odel translation and prove that it embeds $\sf IQC$ into $\sf Q^\circ S4.t$ fully and faithfully; that is, we show that a sentence is provable in $\sf IQC$ iff its translation is provable in $\sf Q^\circ S4.t$. Faithfulness is proved using syntactic methods, while we prove fullness utilizing the generalized Kripke semantics of Corsi.
\end{abstract}

\maketitle

\section{Introduction}\label{sec:introduction}

Unlike classical connectives, intuitionistic connectives lack symmetry. It was noted already by McKinsey and Tarski \cite{MT46} that Heyting algebras (which are algebraic models of intuitionistic propositional calculus $\sf IPC$) are not symmetric even in the weak sense, meaning that the order-dual of a Heyting algebra may no longer be a Heyting algebra. In contrast, Boolean algebras (which are algebraic models of classical propositional calculus) are symmetric in the strong sense, meaning that the order-dual of a Boolean algebra $A$ is not only a Boolean algebra, but even isomorphic to $A$.

This non-symmetry has been addressed by several authors,
resulting in the concepts of bi-Heyting algebras and symmetric Heyting algebras.
Bi-Heyting algebras are obtained by adding to the signature of Heyting algebras a binary operation of co-implication, while symmetric Heyting algebras
by adding a de Morgan negation (and then co-implication becomes de Morgan dual of implication). The order-dual of a bi-Heyting algebra is again a bi-Heyting algebra, and the order-dual of a symmetric Heyting algebra $A$ is even isomorphic to $A$. Thus, the class of bi-Heyting algebras is symmetric in the weak sense, while the class of symmetric Heyting algebras is symmetric in the strong sense (hence the name).

The G\"odel translation of $\sf IPC$ into $\sf S4$ extends to a translation of the Heyting-Brouwer calculus $\sf HB$ of Rauszer \cite{Rau73} into the tense extension $\sf S4.t$ of $\sfour$, which has the future $\sf S4$-modality $\Box_F$ and the past $\sf S4$-modality $\Box_P$. The algebraic models of $\sf HB$ are bi-Heyting algebras, and implication is interpreted using $\Box_F$ and co-implication using $\Box_P$.

This story of non-symmetry also extends to intuitionistic quantifiers. Let $\iqc$ be the intuitionistic predicate calculus and $\qsfour$ the predicate $\sfour$. Not only the intuitionistic quantifiers $\forall x$ and $\exists x$ are not definable from each other (unlike the classical quantifiers), but the G\"odel translation $(\;)^t$ of $\iqc$ into $\qsfour$ is asymmetric in that  $(\forall x A)^t=\Box \forall x A^t$ and $(\exists x A)^t=\exists x A^t$. This is manifested in the interpretation of intuitionistic quantifiers in Kripke models. Indeed, a world $w$ of a Kripke model satisfies $\forall x A$ iff $A$ is true at every object of the domain $D_v$ of every world $v$ accessible from $w$, while $w$ satisfies $\exists x A$ iff $A$ is true at some object in the domain $D_w$ of $w$. If we think of the worlds of a Kripke model as ``states of knowledge," and the order between the states is temporal, then we can interpret the intuitionistic universal quantifier as ``for every object in the future," while the existential quantifier as ``for some object in the present."

In this article we present a more symmetric interpretation of intuitionistic quantifiers as ``for every object in the future" for $\forall x$ and ``for some object in the past" for $\exists x$. We show that such interpretation is supported by translating $\iqc$ fully and faithfully into a predicate tense logic by an appropriate modification of the G\"odel translation. As far as we know, this approach has not been considered in the past. One obvious obstacle is that it is unclear what predicate tense logic to choose for such a translation. Indeed, a natural candidate would be the standard predicate extension $\sf QS4.t$ of $\sf S4.t$. However, since $\sf QS4.t$ proves the Barcan formula, and hence the Kripke frames validating $\qst$ have constant domains, $\iqc$ does not translate fully into $\sf QS4.t$.
Instead we work with a weaker logic in which the universal instantiation axiom
\[
\forall x A \to A(y/x)
\]
is replaced by a weaker version
\[
\forall y (\forall x A \to A(y / x)).
\]
This
approach is along the lines of Kripke~\cite{Kri63}, Hughes and Cresswell~\cite{CH96}, Fitting and Mendelsohn~\cite{FM98}, and Corsi~\cite{Cor02} who considered modal predicate logics without the Barcan and/or converse Barcan formulas.
The generalized Kripke frames considered in this semantics have two domains associated to each world, an inner domain and an outer domain. The inner domains are always contained in the outer domains and are not necessarily increasing. While variables are interpreted in the outer domains, the scope of quantifiers is restricted to the inner domains.
Utilizing this approach, we define a tense predicate logic $\qost$ which is sound with respect to the generalized Kripke semantics with nonempty increasing inner domains and constant outer domains.
We modify the G\"odel translation to define a temporal translation of $\iqc$ into $\qost$
as follows:
\begin{equation*}
\begin{array}{r c l l}
\bot^t &=& \bot & \\
P(x_1, \ldots, x_n)^t &=& \boxf P(x_1, \ldots, x_n) & \qquad \text{for each n-ary predicate symbol } P \\
(A \land B)^t &=& A^t \land B^t &\\
(A \lor B)^t &=& A^t \lor B^t &\\
(A \to B)^t &=& \boxf (A^t \to B^t) &\\
(\forall x A)^t &=& \boxf \forall x A^t & \\
(\exists x A)^t &=& \diap \exists x A^t &
\end{array}
\end{equation*}
Here $\boxf$ is the $\sfour$-modality interpreted as ``always in the future" and $\diap$ is the $\sfour$-modality interpreted as ``sometime in the past."
Thus, the modification of the G\"odel translation concerns the clause for $\exists x A$.
Our main result states that this translation is full and faithful in the following sense:\\

{\bf Main Theorem.}
\begin{itemize}
\item For any formula $A$ in the language of $\iqc$, we have
\begin{equation*}
\iqc \vdash A \quad \mbox{iff} \quad \qost \vdash \forall x_1 \cdots \forall x_n A^t
\end{equation*}
where $x_1, \ldots, x_n$ are the free variables in $A$.
\item If $A$ is a sentence, then
\begin{equation*}
\iqc \vdash A \quad \mbox{iff} \quad \qost \vdash A^t.
\end{equation*}
\end{itemize}

The proof of this surprising result is along the lines of the standard proof of fullness and faithfulness of the G\"odel translation of $\iqc$ into $\qsfour$. We would like to stress that the main challenge is not so much the proof itself, but rather finding the ``right" predicate tense modal logic into which to translate $\iqc$. We find it of interest to explore philosophical (as well as practical) consequences of this new temporal point of view on $\iqc$.

The paper is structured as follows. In Section~\ref{sec:iqc} we recall the intuitionistic predicate logic $\iqc$ and its Kripke completeness. In Section~\ref{sec:modal predicate} we briefly review the basics of modal predicate logics and their Kripke semantics, including weaker modal predicate logics.
In Section~\ref{sec:qost} we recall the tense propositional logic $\st$, consider its standard predicate extension $\qst$, and then introduce its weakening $\qost$ which is our main tense predicate logic of interest.
We conclude the section by observing that $\qost$ is sound with respect to a version of the generalized Kripke semantics studied by Kripke \cite{Kri63}, Hughes and Cresswell \cite{CH96}, Fitting and Mendelsohn \cite{FM98}, and Corsi \cite{Cor02}.
Our main result, that $\iqc$ embeds into $\qost$ fully and faithfully, is proved in Section~\ref{sec:translation}.
We prove faithfulness syntactically, while fullness is proved semantically.
We conclude the paper with Section~\ref{sec:open problems} in which we describe some open problems our study has generated.
Finally, the Appendix
contains the proofs of some technical lemmas used in Sections~\ref{sec:qost} and~\ref{sec:translation}.

\section{The intuitionistic predicate logic}\label{sec:iqc}

Let $\iqc$ be the intuitionistic predicate logic. We recall that the language $\mathcal{L}$ of $\iqc$ consists of countably many individual variables $x,y, \ldots$, countably many $n$-ary predicate symbols $P,Q, \ldots$ (for each $n \geq 0$), the logical connectives $\bot,\wedge,\vee,\to$, and the quantifiers $\forall,\exists$.
We do not add any constants to $\mathcal{L}$ since this results in the temporal translation not being faithful (see Remark~\ref{rem:adding constants}).

Formulas are defined as usual by induction and are denoted with upper case letters $A,B, \ldots$.
Let $x,y$ be individual variables and $A$ a formula. If $x$ is a free variable of $A$ and does not occur in the scope of $\forall y$ or $\exists y$, then we denote by $A(y/x)$ the formula obtained from $A$ by replacing all the free occurrences of $x$ by $y$.

The following definition of $\iqc$ is taken from \cite[Sec~2.6]{GSS09}. We point out that, unlike \cite{GSS09}, we prefer to work with axiom schemes, and hence do not need the inference rule of substitution.

\begin{definition}\label{def:iqc}
The intuitionistic predicate logic $\iqc$ is the least set of formulas of $\mathcal{L}$ containing all substitution instances of theorems of $\ipc$, the axiom schemes
\begin{enumerate}
\item $\forall x A \to A(y/x)$ \hfill Universal instantiation $(\ui)$
\item $A(y/x) \to \exists x A$
\item $\forall x (A \to B) \to (A \to \forall x B)$ \ with $x$ not free in $A$
\item $\forall x (A \to B) \to (\exists x A \to B)$ \ with $x$ not free in $B$
\end{enumerate}
and closed under the inference rules
\vspace{0.2ex}
\[
\begin{array}{c p{0.14cm} l p{0.7cm} c p{0.14cm} l}
\inference {A \quad A \to B}{B} & & \mbox{Modus Ponens (MP)} & & \inference {A}{\forall x A} & & \mbox{Generalization (Gen)} \\[2ex]
\end{array}
\]
\end{definition}

We next describe Kripke semantics for $\iqc$ (see \cite{Kri65,Gab81}).

\begin{definition}\label{def:iqc frame}
An \emph{$\iqc$-frame} is a triple $\mathfrak{F}=(W,R,D)$ where
\begin{itemize}
\item $W$ is a nonempty set whose elements are called the \emph{worlds} of $\mathfrak{F}$.
\item $R$ is a partial order on $W$.
\item $D$ is a function that associates to each $w \in W$ a nonempty set $D_w$ such that $w R v$ implies $D_w \subseteq D_v$ for each $w,v \in W$. The set $D_w$ is called the \emph{domain} of $w$.
\end{itemize}
\end{definition}

\begin{definition}
\begin{itemize}
\item[]
\item An \emph{interpretation} of $\mathcal{L}$ in $\mathfrak{F}$ is a function $I$ associating to each world $w$ and any $n$-ary predicate symbol $P$ an $n$-ary relation $I_w(P) \subseteq (D_w)^n$ such that $w R v$ implies $I_w(P) \subseteq I_v(P)$.
\item A \emph{model} is a pair $\mathfrak{M}=(\mathfrak{F},I)$ where  $\mathfrak{F}$ is an $\iqc$-frame and $I$ is an interpretation in $\mathfrak{F}$.
\item Let $w$ be a world of $\mathfrak{F}$. A \emph{$w$-assignment} is a function $\sigma$ associating to each individual variable $x$ an element $\sigma(x)$ of $D_w$. Note that if $wRv$, then $\sigma$ is also a $v$-assignment.
\item Let $\sigma$ and $\tau$ be two $w$-assignments and $x$ an individual variable. Then $\tau$ is said to be an \emph{$x$-variant} of $\sigma$ if $\tau(y)=\sigma(y)$ for all $y \neq x$.
\end{itemize}
\end{definition}

We next recall the definition of when a formula $A$ is true in a world $w$ of a model $\mathfrak{M}=(\mathfrak{F}, I)$ under the $w$-assignment $\sigma$, written
$\mathfrak{M} \vDash_w^\sigma A$.

\begin{definition}
\begin{equation*}
\begin{array}{l c l}
\mathfrak{M} \vDash_w^\sigma \bot &  & \mbox{never} \\
\mathfrak{M} \vDash_w^\sigma P(x_1, \ldots, x_n) & \text{ iff } & (\sigma(x_1), \ldots, \sigma(x_n)) \in I_w(P) \\
\mathfrak{M} \vDash_w^\sigma B \land C & \text{ iff } & \mathfrak{M} \vDash_w^\sigma B \mbox{ and } \mathfrak{M} \vDash_w^\sigma C \\
\mathfrak{M} \vDash_w^\sigma B \lor C & \text{ iff } & \mathfrak{M} \vDash_w^\sigma B \mbox{ or } \mathfrak{M} \vDash_w^\sigma C \\
\mathfrak{M} \vDash_w^\sigma B \to C & \text{ iff } & \mbox{for all $v$ with $wRv$, if $\mathfrak{M} \vDash_v^\sigma B$, then $\mathfrak{M} \vDash_v^\sigma C$}\\
\mathfrak{M} \vDash_w^\sigma \forall x B & \text{ iff } & \mbox{for all $v$ with $wRv$ and each $v$-assignment $\tau$} \\
& & \mbox{that is an $x$-variant of $\sigma$, } \mathfrak{M} \vDash_v^\tau B \\
\mathfrak{M} \vDash_w^\sigma \exists x B & \text{ iff } & \mbox{there exists a $w$-assignment $\tau$} \\
& & \mbox{that is an $x$-variant of $\sigma$ such that } \mathfrak{M} \vDash_w^\tau B
\end{array}
\end{equation*}
\end{definition}

\begin{definition}\label{def:truth and validity in iqc}
\begin{itemize}
\item[]
\item We say that $A$ is \emph{true} in a world $w$ of $\mathfrak{M}$, written $\mathfrak{M} \vDash_w A$, if for all $w$-assignments $\sigma$, we have $\mathfrak{M} \vDash_w^\sigma A$.
\item We say that $A$ is \emph{true} in $\mathfrak{M}$, written $\mathfrak{M} \vDash A$, if for all worlds $w \in W$, we have $\mathfrak{M} \vDash_w A$.
\item We say that $A$ is \emph{valid} in a frame $\mathfrak{F}$, written $\mathfrak{F} \vDash A$, if for all models $\mathfrak{M}$ based on $\mathfrak{F}$, we have $\mathfrak{M} \vDash A$.
\end{itemize}
\end{definition}

We have the following well-known completeness of $\iqc$ with respect to Kripke semantics.

\begin{theorem} [\cite{Kri65}]\label{thm:sound and compl of iqc}
The intuitionistic predicate logic $\iqc$ is sound and complete with respect to Kripke semantics; that is, for each formula $A$,
\[
\iqc \vdash A \mbox{ iff }\mathfrak{F} \vDash A \mbox{ for each $\iqc$-frame $\mathfrak{F}$.}
\]
\end{theorem}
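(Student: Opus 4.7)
The proof separates into \emph{soundness} and \emph{completeness}, and both directions follow classical templates. For soundness, the plan is to induct on the length of an $\iqc$-derivation. The prerequisite step is a persistence lemma asserting that if $\mathfrak{M} \vDash_w^\sigma B$ and $wRv$, then $\mathfrak{M} \vDash_v^\sigma B$; this is proved by induction on $B$ using transitivity of $R$ together with monotonicity of both $D$ and $I$. Given persistence, the propositional Kripke completeness of $\ipc$ handles all $\ipc$-axioms, and axioms (1)--(4) of Definition~\ref{def:iqc} are straightforward to verify. The verifications of (1) and (2) require a substitution lemma saying that $\mathfrak{M} \vDash_w^\sigma B(y/x)$ iff $\mathfrak{M} \vDash_w^\tau B$, where $\tau$ is the $x$-variant of $\sigma$ sending $x$ to $\sigma(y)$; the verifications of (3) and (4) rely on the side condition on the free variables, which makes the truth value of the relevant subformula independent of any $x$-variant. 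The rules $(\mathrm{MP})$ and $(\mathrm{Gen})$ preserve truth directly from the clauses for $\to$ and $\forall$.

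For completeness, I would employ a Henkin-style canonical model construction. Suppose $\iqc \nvdash A$. Fix a countable reservoir $C$ of fresh variables and call a set $\Gamma$ of formulas over $\mathcal{L}$ enriched with parameters from $C$ a \emph{prime theory} if it is deductively closed under $\iqc$, consistent, disjunction-prime (whenever $B_1 \lor B_2 \in \Gamma$, some disjunct lies in $\Gamma$), and has the \emph{witness property} (whenever $\exists x B \in \Gamma$, there is $c \in C$ with $B(c/x) \in \Gamma$). The central lemma is that any theory $\Gamma$ with $\Gamma \nvdash B$ extends to a prime $\Gamma^* \supseteq \Gamma$ with $\Gamma^* \nvdash B$. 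This is proved by a Lindenbaum-style enumeration that, at each stage, either includes the next formula (adjoining a fresh witness if it is existential) or selects a disjunct, always preserving non-derivability of $B$.

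The canonical $\iqc$-frame $\mathfrak{F}^c = (W^c, R^c, D^c)$ has prime theories as worlds, $R^c$ given by set inclusion, $D^c_\Gamma$ the set of parameters from $C$ occurring in formulas of $\Gamma$, and $I^c_\Gamma(P) = \{(c_1, \ldots, c_n) \mid P(c_1, \ldots, c_n) \in \Gamma\}$. The truth lemma---$\mathfrak{M}^c \vDash_\Gamma^\sigma B$ iff the formula obtained from $B$ by replacing each free variable $x$ with $\sigma(x)$ lies in $\Gamma$---is proved by induction on $B$, using the extension lemma to produce witnessing successor worlds in the $\to$ and $\forall$ clauses and the witness property in the $\exists$ clause. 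A prime extension of the empty theory carefully built to exclude $A$ then falsifies $A$ in $\mathfrak{M}^c$. The main technical obstacle is the extension lemma: one must interleave deductive closure, the addition of Henkin witnesses (which enlarges the language and hence the supply of formulas still to be processed), and disjunction-primeness, all while preserving non-derivability of $A$ throughout the construction.
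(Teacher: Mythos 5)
The paper does not prove this statement at all: it is quoted as a known result with a citation to Kripke's 1965 paper, and nothing in the body or the appendix supplies an argument. So there is nothing to compare line by line; what you have written is essentially the standard proof that the citation points to. Your outline is correct in both directions. The soundness half (persistence lemma by induction on the formula using transitivity of $R$ and monotonicity of $D$ and $I$, a substitution lemma for axioms (1) and (2), the free-variable side conditions for (3) and (4)) is exactly the routine verification. The completeness half via prime theories with the disjunction and witness properties, inclusion as the accessibility relation, parameters as the domain, and a truth lemma, is the canonical-model argument found in Gabbay's book cited by the paper.

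One place where your sketch glosses over a real technical point: for the truth lemma's $\to$ and $\forall$ clauses you must be able to extend an arbitrary world $\Gamma$ of the canonical model to a new prime theory, adjoining fresh Henkin witnesses along the way. This requires that every prime theory you ever construct leaves infinitely many parameters of $C$ unused; a naive Lindenbaum enumeration that freely consumes witnesses can exhaust $C$. The usual fix is to stratify $C$ into infinitely many infinite layers and let the worlds reachable in $n$ steps use only the first $n$ layers (equivalently, to take worlds to be pairs of a theory and a sublanguage). Relatedly, you should arrange that $D^c_\Gamma$ is nonempty, since Definition~\ref{def:iqc frame} requires nonempty domains; with $\nid$-style reasoning built into $\iqc$ (axiom (1) plus (2)) this is easily ensured by seeding each theory with at least one parameter. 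Neither issue threatens the approach, but both need to be addressed for the extension lemma you correctly identify as the crux.
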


\section{Modal predicate logics}\label{sec:modal predicate}

Modal predicate logics were first studied by Barcan~\cite{Bar46} and Carnap~\cite{Car46} in 1940s. The semantic study of modal predicate logics was initiated by Kripke~\cite{Kri59,Kri63} in late 1950s/early 1960s. Since then many completeness results have been obtained with respect to Kripke semantics, but there is also a large body of incompleteness results, which is one of the reasons that the model theory of modal predicate logics is less advanced than that of modal propositional logics (see, e.g.,~\cite{GSS09,Gar01} and the references therein).

Let $\sf K$ be the least normal modal propositional logic and let $\qk$ be the standard predicate extension of $\sf K$. The language $\mathcal{L}_\Box$ of $\qk$ is the extension of $\mathcal L$ with the modality $\Box$.
Since the modal logics we consider are based on the classical logic, it is sufficient to only consider the logical connectives $\bot,\to$ and the quantifier $\forall$. The logical connectives $\wedge,\vee,\neg,\leftrightarrow$, the quantifier $\exists$, and the modality $\Diamond$ are treated as usual abbreviations.

We next recall the definition of $\qk$ (see, e.g.,~\cite[Sec~2.6]{GSS09}, but note, as in Section~\ref{sec:iqc}, that we work with axiom schemes instead of having the inference rule of substitution).

\begin{definition}\label{def:qk}
The modal predicate logic $\qk$ is the least set of formulas of $\mathcal{L}_\Box$ containing all substitution instances of theorems of $\sf K$, the axiom schemes (i) and (iii) of Definition~\ref{def:iqc},
and closed under (MP), (Gen), and
\begin{equation*}
\inference {A}{\Box A} \quad \mbox{Necessitation (N)}
\end{equation*}
\end{definition}

The definition of \emph{$\qk$-frames} $\mathfrak{F}=(W,R,D)$ is the same as that of $\iqc$-frames (see Definition~\ref{def:iqc frame}) with the only difference that $R$ can be an arbitrary relation. \emph{Models} are also defined the same way, but without the requirement that $w R v$ implies $I_w(P) \subseteq I_v(P)$. The connectives and quantifiers are interpreted at each world in the usual classical way, and
\[
\mathfrak M\models_w^\sigma\Box A \mbox{ iff } (\forall v\in W)(w R v \Rightarrow \mathfrak M\models_v^\sigma A).
\]
\emph{Truth} and \emph{validity} of formulas are defined as usual.

We next give a brief history of first Kripke completeness results for modal predicate logics. In 1959 Kripke~\cite{Kri59} proved Kripke completeness of predicate $\sfive$. In late 1960s Cresswell~\cite{Cre67,Cre68} (see also Hughes and Cresswell~\cite{CH68}), Sch\"utte~\cite{Sch68}, and Thomason~\cite{Tho70} proved Kripke completeness of predicate $\sf T$ and $\sfour$. Kripke completeness of $\qk$ was first established by Gabbay~\cite[Thm.~8.5]{Gab76}\footnote{We would like to thank Ilya Shapirovsky and Valentin Shehtman for useful discussions on the history of Kripke completeness for modal predicate logics.}:

\begin{theorem} \label{thm:qk}
The modal predicate logic $\qk$ is sound and complete with respect to Kripke semantics.
\end{theorem}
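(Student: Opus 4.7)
Soundness is routine: the plan is to check that each axiom of Definition~\ref{def:qk} is valid in every $\qk$-frame and that the inference rules preserve validity. The propositional $\sf K$-axioms and (MP) are handled worldwise by the classical argument. For the two quantifier schemes, one uses that $R$-successors have larger domains, so every $w$-assignment is automatically a legal $v$-assignment whenever $wRv$; in particular axiom (i) $\forall x A \to A(y/x)$ is sound because $\sigma(y)$ continues to live in the domain of any accessible world. Necessitation (N) is immediate from the semantic clause for $\Box$, and generalization (Gen) follows since validity in a frame already quantifies over all worlds and all assignments.

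Completeness is the real content. Given $\qk \not\vdash A$, the plan is to construct a $\qk$-model in which $A$ fails by a Henkin-style canonical model construction. First I would enrich the language $\mathcal{L}_\Box$ by countably many fresh individual variables, partitioned into disjoint pools $V_0, V_1, V_2, \ldots$, so that a fresh supply of witnesses is always available. The worlds of the canonical model are maximal $\qk$-consistent sets $\Gamma$ in the enriched language that are \emph{Henkin-saturated}: whenever $\exists x B \in \Gamma$ there is a variable $y$ with $B(y/x) \in \Gamma$. The accessibility relation is $\Gamma R \Delta$ iff $\{B : \Box B \in \Gamma\} \subseteq \Delta$, the domain $D_\Gamma$ is the set of variables declared at $\Gamma$, and $I_\Gamma(P) = \{(y_1, \ldots, y_n) : P(y_1, \ldots, y_n) \in \Gamma\}$. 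Two standard Lindenbaum-style Existence Lemmas, namely, extending any consistent set to a maximal Henkin-saturated one using a fresh pool, and verifying $\qk$-consistency of $\{B : \Box B \in \Gamma\} \cup \{\neg C\}$ whenever $\Box C \notin \Gamma$, then yield a Truth Lemma by induction on formula complexity. Starting from a $\Gamma_0$ containing $\neg A$ produces the desired countermodel.

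The main obstacle will be maintaining the domain-monotonicity condition $\Gamma R \Delta \Rightarrow D_\Gamma \subseteq D_\Delta$ without inadvertently forcing the Barcan formula, which $\qk$ does not prove. A naive canonical model that places every enriched variable in every domain would make domains constant and hence validate Barcan, overshooting $\qk$. The remedy is to build the canonical model tree-wise from $\Gamma_0$, introducing Henkin witnesses at each new world from a fresh pool $V_n$ disjoint from the pools used at its ancestors, while ensuring that all variables in $D_\Gamma$ are automatically placed in $D_\Delta$ whenever the edge $\Gamma R \Delta$ is created. Under this discipline, axiom (i) of Definition~\ref{def:iqc} guarantees that each inherited variable still behaves as a genuine element of the inner quantifier domain at $\Delta$, while axiom (iii) of Definition~\ref{def:iqc} is what powers the quantifier step of the Truth Lemma (it is exactly what lets one commute $\forall x$ past implications in the derivation of the inductive clause). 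The resulting model is a $\qk$-frame with the required domain monotonicity, and the Truth Lemma delivers $\mathfrak{M} \not\models_{\Gamma_0} A$, completing the proof.
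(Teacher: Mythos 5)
The paper does not actually prove this theorem: it is quoted from the literature, with Kripke completeness of $\qk$ attributed to Gabbay~\cite{Gab76}, so there is no in-paper argument to compare against. Your outline is the standard Henkin-style canonical model proof, which is essentially the cited one, and it is correct in its main lines: the soundness check, the Lindenbaum extension with Henkin saturation over fresh variable pools, the Existence Lemma for $\Box$ via consistency of $\{B : \Box B \in \Gamma\} \cup \{\neg C\}$, and the tree-wise construction that keeps domains increasing without collapsing them to a constant domain (your observation that a constant-domain canonical model would wrongly validate $\barc$ is exactly the right diagnostic). Two small inaccuracies worth fixing: soundness of $\forall x A \to A(y/x)$ is a within-world matter (it needs only $\sigma(y) \in D_w$), not a consequence of domains growing along $R$; and the phrase ``inner quantifier domain'' imports terminology from the $\qok$ semantics of Section~3, which does not apply to $\qk$-frames, where each world has a single domain. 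Also, axiom (iii) of Definition~\ref{def:iqc} does its real work in the witness-adding step of the saturation lemma (generalizing on a fresh variable and pulling the quantifier past the implication), rather than in the $\forall$-clause of the Truth Lemma itself, which rests on universal instantiation and $\exists$-saturation. None of this affects the viability of the approach.
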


The following two principles play an important role in the study of modal predicate logics. They were first considered by Barcan~\cite{Bar46}.
\begin{equation*}
\begin{array}{l p{1cm} l p{0.5cm} l}
\Box \forall x A \to \forall x \Box A & & \mbox{converse Barcan formula} & & (\cbarc)\\
\forall x \Box A \to \Box \forall x A & & \mbox{Barcan formula} & & (\barc)\\
\end{array}
\end{equation*}
It is easy to see that $\cbarc$ is a theorem of $\qk$.
Indeed, this follows from Theorem~\ref{thm:qk} and the fact that domains of each $\qk$-frame are increasing. On the other hand, a $\qk$-frame validates $\barc$ iff it has \emph{constant domains}, meaning that $wRv$ implies $D_w=D_v$, and we have the following well-known theorem (see, e.g.,~\cite[Thm.~9.3]{Gab76}):

\begin{theorem}
The logic $\qk+\barc$ is sound and complete with respect to the class of $\qk$-frames with constant domains.
\end{theorem}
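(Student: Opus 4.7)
The plan is to prove soundness and completeness separately.

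For soundness, Theorem~\ref{thm:qk} gives soundness of $\qk$ for arbitrary $\qk$-frames, so it suffices to verify that $\barc$ is valid on every $\qk$-frame $\mathfrak{F}$ with constant domains. Fix a model $\mathfrak{M}$ on $\mathfrak{F}$, a world $w$, and a $w$-assignment $\sigma$ with $\mathfrak{M}\vDash_w^\sigma\forall x\,\Box A$. For any $v$ with $wRv$ and any $v$-assignment $\tau$ that is an $x$-variant of $\sigma$, the constant-domain hypothesis $D_w=D_v$ makes $\tau$ also a $w$-assignment $x$-variant of $\sigma$, so $\mathfrak{M}\vDash_w^\tau\Box A$ and hence $\mathfrak{M}\vDash_v^\tau A$. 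Thus $\mathfrak{M}\vDash_w^\sigma\Box\forall x\,A$.

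For completeness, I would use a Henkin-style canonical-model construction. Expand $\mathcal{L}_\Box$ by a countably infinite set $C$ of fresh individual constants and prove the usual Lindenbaum-type lemma: every $\qk+\barc$-consistent set of sentences of the expanded language extends to a maximal consistent \emph{Henkin} set, one in which every $\exists x\,A$ in the set has a witness $c\in C$ with $A(c/x)$ also in the set. Let $W^c$ be the collection of such sets, put $\Gamma R^c\Delta$ iff $\{A:\Box A\in\Gamma\}\subseteq\Delta$, set $D^c_\Gamma=C$ for every $\Gamma$ so that constant domains hold by construction, and interpret $I^c_\Gamma(P)=\{(c_1,\ldots,c_n)\in C^n:P(c_1,\ldots,c_n)\in\Gamma\}$.

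The core of the argument is the truth lemma $\mathfrak{M}^c\vDash_\Gamma A$ iff $A\in\Gamma$, proved by induction on $A$. The Boolean cases are routine; the $\forall$-case uses universal instantiation together with Henkin-ness in the usual way; and the $\Box$-case reduces to an existence lemma: whenever $\Box A\notin\Gamma$, there is some $\Delta\in W^c$ with $\Gamma R^c\Delta$ and $\neg A\in\Delta$, obtained by extending $\{B:\Box B\in\Gamma\}\cup\{\neg A\}$ (whose consistency follows from normality and necessitation). The Barcan axiom enters precisely in carrying out this extension \emph{using only the single constant set} $C$: it is what allows witnesses for existentials at the successor world to be pulled back through $\Box$ to $\Gamma$, so no new constants outside $C$ need be introduced and the constant-domain structure is preserved. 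This step is the main obstacle. Once the truth lemma is in hand, completeness follows in the standard way: if $\qk+\barc\nvdash A$, then $\{\neg A\}$ is consistent, extends to some $\Gamma\in W^c$, and the truth lemma refutes $A$ on the constant-domain frame $\mathfrak{F}^c$.
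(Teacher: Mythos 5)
The paper does not actually prove this theorem: it is stated as a well-known result with a pointer to Theorem~9.3 of \cite{Gab76}, so there is no in-paper argument to compare yours against. Your sketch is essentially the standard proof from that cited literature. The soundness half is correct and complete as written. For completeness, your identification of where $\barc$ does its work is exactly right: in the existence lemma, when extending $\{B : \Box B\in\Gamma\}\cup\{\neg A\}$ to a Henkin set over the \emph{same} constant set $C$, one uses $\Diamond\exists x E\to\exists x\Diamond E$ (the equivalent contrapositive form of $\barc$) to pull the required witness back into $\Gamma$, where the Henkin property of $\Gamma$ supplies a constant. One imprecision: the Lindenbaum-type lemma as you state it (``every consistent set of sentences of the expanded language extends to a maximal consistent Henkin set'') is false in that generality --- for instance $\{\exists x\, P(x)\}\cup\{\neg P(c):c\in C\}$ is consistent but admits no Henkin extension over $C$. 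You need the lemma only for sets leaving infinitely many constants of $C$ unused (which covers the root world $\{\neg A\}$, since $A$ contains no constants of $C$), while successor worlds must be handled by the $\barc$-driven witness-pulling step you describe rather than by the general lemma. With that qualification the argument is the standard one and goes through.
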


A modal predicate logic whose Kripke frames have neither increasing nor decreasing domains was considered already by Kripke~\cite{Kri63}.
Building on this work,
Hughes and Cresswell~\cite[pp.~304--309]{CH96} introduced a similar predicate modal logic and proved its completeness with respect to a generalized Kripke semantics. Fitting and Mendelsohn~\cite[Sec.~6.2]{FM98} gave an alternate axiomatization of this logic. Building on the work of Fitting and Mendelsohn,
Corsi~\cite{Cor02} defined the system $\qok$ whose axiomatization contains a weakening of the universal instantiation axiom.

\begin{definition}\label{def:qok}
The logic $\qok$ is the least set of formulas of $\mathcal{L}_\Box$ containing all substitution instances of theorems of
$\sf K$, the axiom schemes
\begin{enumerate}
\item $\forall y (\forall x A \to A(y / x))$ \hfill $(\uio)$
\item $\forall x (A \to B) \to (\forall x A \to \forall x B)$
\item $\forall x \forall y A \leftrightarrow \forall y \forall x A$
\item $A \rightarrow \forall x A$ \ with $x$ not free in $A$
\end{enumerate}
and closed under (MP), (Gen), and (N).
\end{definition}

\begin{remark}
In Definition~\ref{def:qok}, replacing $\uio$ with $\ui$ yields an equivalent definition of $\qk$. Therefore, $\qok$ is contained in $\qk$.
\end{remark}

Kripke frames for $\qok$ generalize Kripke frames for $\qk$ by having two domains, inner and outer.

\begin{definition} \label{def:qok frame}
A \textit{$\qok$-frame} is a quadruple $\mathfrak{F}=(W,R,D,U)$ where
\begin{itemize}
\item $(W,R)$ is a $\sf K$-frame.
\item $D$ is a function that associates to each $w \in W$ a set $D_w$.
The set $D_w$ is called the \emph{inner domain} of $w$.
\item $U$ is a nonempty set containing the union of all the $D_w$. The set $U$ is called the \emph{outer domain} of $\mathfrak{F}$.
\end{itemize}
\end{definition}

Definition~\ref{def:qok frame} is a particular case of the frames considered by Corsi \cite{Cor02}
where increasing outer domains are allowed.
For our purposes, taking a fixed outer domain $U$ is sufficient.
We recall from \cite{Cor02} how to interpret $\mathcal{L}_\Box$ in a $\qok$-frame $\mathfrak{F}=(W,R,D,U)$.

\begin{definition}\label{def:model and assign for qok}
\begin{itemize}
\item[]
\item An \emph{interpretation} of $\mathcal{L}_\Box$ in $\mathfrak{F}$ is a function $I$ associating to each world $w$ and an $n$-ary predicate symbol $P$ an $n$-ary relation $I_w(P) \subseteq U^n$.
\item A \emph{model} is a pair $\mathfrak{M}=(\mathfrak{F},I)$ where $\mathfrak{F}$ is a $\qok$-frame and $I$ is an interpretation in $\mathfrak{F}$.
\item An \emph{assignment} in $\mathfrak{F}$ is a function $\sigma$ that associates to each individual variable an element of $U$.
\item If $\sigma$ and $\tau$ are two assignments and $x$ is an individual variable, $\tau$ is said to be an \emph{$x$-variant} of $\sigma$ if $\tau(y)=\sigma(y)$ for all $y \neq x$.
\item We say that an assignment $\sigma$ is \emph{$w$-inner} for $w \in W$ if $\sigma(x) \in D_w$ for each individual variable $x$.
\end{itemize}
\end{definition}

We next recall from \cite{Cor02} the definition of when a formula $A$ is true in a world $w$ of a model $\mathfrak{M}=(\mathfrak{F}, I)$ under the assignment $\sigma$, written
$\mathfrak{M} \vDash_w^\sigma A$.

\begin{definition}\label{def:truth in a model for qok}
\begin{equation*}
\begin{array}{l c l}
\mathfrak{M} \vDash_w^\sigma \bot &  & \mbox{never} \\
\mathfrak{M} \vDash_w^\sigma P(x_1, \ldots, x_n) & \text{ iff } & (\sigma(x_1), \ldots, \sigma(x_n)) \in I_w(P) \\
\mathfrak{M} \vDash_w^\sigma B \to C & \text{ iff } & \mathfrak{M} \vDash_w^\sigma B \text{ implies } \mathfrak{M} \vDash_w^\sigma C \\
\mathfrak{M} \vDash_w^\sigma \forall x B & \text{ iff } & \mbox{for all $x$-variants $\tau$ of $\sigma$ with $\tau(x) \in D_w$, } \mathfrak{M} \vDash_w^\tau B \\
\mathfrak{M} \vDash_w^\sigma \Box B & \text{ iff } & \mbox{for all $v$ such that $wRv$, } \mathfrak{M} \vDash_v^\sigma B \\
\end{array}
\end{equation*}
\end{definition}

\begin{definition}\label{def:truth and validity in qok}
A formula $A$ is \emph{true} in a model $\mathfrak{M}=(\mathfrak{F}, I)$ at the world $w \in W$ (in symbols $\mathfrak{M} \vDash_w A$) if for all assignments $\sigma$, we have $\mathfrak{M} \vDash_w^\sigma A$.
The definition of \emph{truth} in a model and \emph{validity} in a frame are the same as in Definition~\ref{def:truth and validity in iqc}.
\end{definition}

We have the following completeness result for $\qok$, see~\cite[Thm.~1.32]{Cor02} and its proof.

\begin{theorem}
$\qok$ is sound and complete with respect to the class of $\qok$-frames.
\end{theorem}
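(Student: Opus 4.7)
The plan is to prove soundness by a routine induction on proofs in $\qok$ and to establish completeness via a Henkin-style canonical model adapted to the two-domain semantics. Soundness is the routine half: the substitution instances of $\sf K$-theorems together with (MP) and (N) are handled exactly as in the propositional case, and axioms (2)--(4) of Definition~\ref{def:qok} are immediate from the semantic clause for $\forall$ in Definition~\ref{def:truth in a model for qok}. The only quantifier axiom requiring brief computation is $\uio$: fixing a world $w$, an assignment $\sigma$, a $y$-variant $\tau$ of $\sigma$ with $\tau(y)\in D_w$, and assuming $\mathfrak M\vDash_w^\tau \forall x A$, the $x$-variant $\rho$ of $\tau$ with $\rho(x)=\tau(y)$ lies in $D_w$, so $\mathfrak M\vDash_w^\rho A$, and a routine substitution lemma yields $\mathfrak M\vDash_w^\tau A(y/x)$.

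For completeness I would construct a canonical model in Henkin style. Starting from a $\qok$-consistent set $\Gamma$, I would enlarge the language with countably many fresh variables $V^+$ and, by a step-by-step Lindenbaum construction, extend $\Gamma$ to a maximally $\qok$-consistent and saturated set $w$: whenever $\neg\forall x A\in w$ there is some $y\in V^+$ with $\neg A(y/x)\in w$. Performing this uniformly for all worlds reachable by the modal accessibility, I would define the canonical frame $\mathfrak F^c=(W^c,R^c,D^c,U^c)$ with $W^c$ the collection of such maximal saturated sets, $wR^c v$ iff $\{A:\Box A\in w\}\subseteq v$, outer domain $U^c$ the full set of variables, and inner domain $D^c_w=\{y\in U^c : \text{for every formula } B,\ \forall x B\in w \Rightarrow B(y/x)\in w\}$. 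With canonical interpretation $I_w(P)=\{(y_1,\dots,y_n):P(y_1,\dots,y_n)\in w\}$ and the identity assignment $\sigma^c(y)=y$, the truth lemma $\mathfrak M^c\vDash_w^{\sigma^c} A$ iff $A\in w$ is then proved by induction on $A$.

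The main obstacle is the $\forall$-clause of the truth lemma. The forward direction is immediate from the defining condition of $D^c_w$. For the backward direction, given $\forall x A\notin w$ one needs a witness $y\in D^c_w$ with $A(y/x)\notin w$. The witness for $\neg\forall x A$ supplied by saturation can be taken fresh to $w$, and by combining $\uio$ with (Gen) and the commutation axiom for $\forall$ one verifies that the set $w \cup \{B(y/x): \forall x B \in w\} \cup \{\neg A(y/x)\}$ is $\qok$-consistent whenever $y$ is sufficiently fresh; saturating this larger set forces $y$ into $D^c_w$, which is exactly what the lemma demands. The modal clause of the truth lemma is then handled by the standard canonical argument using $R^c$ and (N), and nonemptiness of $U^c$ is automatic from the presence of fresh variables. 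From the truth lemma one reads off that any non-theorem of $\qok$ is refuted at some world of some canonical model.
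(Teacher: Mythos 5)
The paper does not prove this theorem itself; it cites \cite[Thm.~1.32]{Cor02}. Your soundness half is fine, and your completeness architecture (Henkin construction, inner domain $D^c_w$ consisting of the variables that instantiate every universal formula of $w$, richness supplying witnesses) is indeed the Hughes--Cresswell/Corsi route. But the step you yourself flag as the main obstacle is exactly where your argument breaks. You propose to take an already maximal $\qok$-consistent $w$ with $\forall x A\notin w$, pick a ``sufficiently fresh'' $y$, verify that $w\cup\{B(y/x):\forall x B\in w\}\cup\{\neg A(y/x)\}$ is consistent, and then ``saturate this larger set'' to force $y$ into $D^c_w$. This cannot work as stated: a maximal consistent set in the full language mentions every variable (it contains $P(y)\vee\neg P(y)$ and $\forall x\,(P(y)\vee\neg P(y))$ for every $y$), so no variable is fresh relative to $w$ or to $\{B:\forall x B\in w\}$; and a maximal consistent set has no consistent proper extension, so ``saturating the larger set'' either returns $w$ unchanged or collapses into inconsistency. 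Richness --- every $\neg\forall x A\in w$ having a witness $y$ that actually lies in $D^c_w$ --- has to be built into the Lindenbaum construction from the start, by interleaving the decision of formulas with the declaration of witnesses and the standing commitment that $\forall x B\in w$ entails $B(y/x)\in w$ for each declared $y$; the consistency of each finite stage is then what is secured by the derivation you gesture at ($\uio$, (Gen), and re-quantification $\forall y\,C(y/x)\to\forall x\,C$ for fresh $y$, which is derivable from the axioms of Definition~\ref{def:qok}).

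A second gap of the same kind occurs at the modal clause. To refute $\Box A\notin w$ you need a \emph{rich} maximal consistent $v\supseteq\{B:\Box B\in w\}\cup\{\neg A\}$; but since $\Box(P(y)\vee\neg P(y))\in w$ for every $y$ by (N), the seed set $\{B:\Box B\in w\}$ already exhausts all variables, leaving none fresh to serve as witnesses in $v$. This is precisely why Corsi's canonical model has outer domains that increase along the accessibility relation (each accessible world gets a further expanded language) --- a point the paper itself makes in Section~\ref{sec:open problems} --- and completeness for the fixed-outer-domain frames of Definition~\ref{def:qok frame} is then recovered by replacing all outer domains with their union, which preserves the refuting world and assignment. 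Your choice of a single $U^c$ equal to the full set of variables at every world skips this and would get stuck at the first modal step.
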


\begin{definition}
Let $\mathfrak{F}=(W,R,D,U)$ be a $\qok$-frame.
\begin{itemize}
\item We say that $\mathfrak{F}$ has \emph{increasing inner domains} if $wRv$ implies $D_w \subseteq D_v$ for each $w,v\in W$.
\item We say that $\mathfrak{F}$ has \emph{decreasing inner domains} if $wRv$ implies $D_v \subseteq D_w$ for each $w,v\in W$.
\item If $\mathfrak{F}$ has both increasing and decreasing inner domains, we say that it has \emph{constant inner domains}.
\end{itemize}
\end{definition}

The following axiom scheme guarantees nonempty inner domains (hence the abbreviation):
\begin{equation*}
\begin{array}{l p{2cm} l}
\forall x A \rightarrow A \: \mbox{ with $x$ not free in $A$} &  & (\nid)\\
\end{array}
\end{equation*}

The next proposition is not difficult to verify (see, e.g.,~\cite[Sec.~4.9]{FM98} and~\cite[pp.~1487--1488]{Cor02}).

\begin{proposition} 
Let $\mathfrak{F}=(W,R,D,U)$ be a $\qok$-frame.
\begin{itemize}
\item $\mathfrak{F}$ validates $\cbarc$ iff $\mathfrak{F}$ has increasing inner domains.
\item $\mathfrak{F}$ validates $\barc$ iff $\mathfrak{F}$ has decreasing inner domains.
\item $\mathfrak{F}$ validates $\nid$ iff $\mathfrak{F}$ has nonempty inner domains.
\end{itemize}
\end{proposition}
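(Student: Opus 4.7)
The plan is to prove the three equivalences separately by standard frame-correspondence arguments, unfolding the truth clauses of Definition~\ref{def:truth in a model for qok}. In each case, the direction from the frame property to validity is a short direct calculation, while the converse direction is handled by contraposition: from a witness of the failure of the frame property I build an explicit interpretation that falsifies the axiom at the appropriate world. Since the interpretation of predicate symbols in a $\qok$-model is not required to be monotone along $R$ (unlike in $\iqc$-models), the countermodels can be constructed with complete freedom.

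For $\cbarc$, the key observation is that the implication ``$\tau(x) \in D_w \Rightarrow \tau(x) \in D_v$'' for every $wRv$ is precisely the increasing inner domains condition, and this is exactly what is needed to push a bounded universal statement from $w$ down to $v$. For the converse direction, given $wRv$ and $a \in D_w \setminus D_v$, I would use a unary predicate $P$ interpreted at every world $u$ by $I_u(P)=D_u$; then $\forall x P(x)$ is true everywhere, so $\Box\forall x P(x)$ is true at $w$, while the $x$-variant sending $x$ to $a$ witnesses the failure of $\forall x \Box P(x)$ at $w$. The $\barc$ case is entirely symmetric; here, given $wRv$ and $a \in D_v \setminus D_w$, I would use the constant interpretation $I_u(P) = D_w$ at every world, so that $\forall x \Box P(x)$ holds at $w$ (its witnesses $\tau(x)$ lie in $D_w$, which $I$ fixes globally), while $\Box\forall x P(x)$ fails at $v$ via the element $a$.

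The $\nid$ case is the shortest. If $D_w$ is nonempty, any $a \in D_w$ allows one to instantiate $\forall x A$ to $A$, and the hypothesis that $x$ is not free in $A$ ensures that the resulting $x$-variant evaluates $A$ identically to $\sigma$. Conversely, if $D_w = \emptyset$, then $\forall x P$ is vacuously true at $w$ for any $0$-ary predicate symbol $P$, so choosing $I_w(P) = \emptyset$ falsifies $\nid$ at $w$. There is no genuine obstacle in any of these arguments; the only subtlety to keep track of throughout is the distinction between the outer domain $U$ (the range of assignments) and the inner domain $D_w$ (the range of quantification), so that $x$-variants and witnesses are always constructed inside the correct set.
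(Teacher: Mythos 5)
Your proposal is correct and is exactly the standard frame-correspondence argument that the paper omits, deferring instead to Fitting--Mendelsohn and Corsi; both directions of each equivalence, including the countermodels for $\cbarc$, $\barc$, and $\nid$, check out against the truth clauses of Definition~\ref{def:truth in a model for qok}. The only implicit ingredient is the usual agreement lemma (truth depends only on the values of free variables) in the $\nid$ case, which is standard and unproblematic.
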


We have the following completeness results for logics obtained by adding $\cbarc$, $\barc$, and $\nid$ to $\qok$ (see~\cite[Thms.~1.30, 1.32, and Footnote~7]{Cor02}):

\begin{theorem}\label{thm:completeness results from corsi}
\begin{itemize}
\item[]
\item $\qok+\cbarc$ is sound and complete with respect to the class of $\qok$-frames with increasing inner domains.
\item $\qok+\cbarc+\barc$ is sound and complete with respect to the class of $\qok$-frames with constant inner domains.
\item Adding $\nid$ to the above two logics or to $\qok$ yields completeness of the resulting logics with respect to the corresponding classes of frames which have nonempty inner domains.
\end{itemize}
\end{theorem}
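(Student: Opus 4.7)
My plan is to prove soundness by frame-condition checks and completeness by a Henkin-style canonical model construction, following Corsi \cite{Cor02}. Soundness is routine: by the proposition immediately preceding the theorem, a $\qok$-frame validates $\cbarc$ (respectively $\barc$, $\nid$) iff its inner domains are increasing (respectively decreasing, nonempty), so once soundness of $\qok$ itself is known, all three soundness claims are immediate. The work lies entirely in the three completeness statements, which I would handle uniformly: given an $L$-consistent formula (for $L$ one of the logics listed), I must produce an $L$-model on a frame satisfying the appropriate inner-domain conditions.

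For completeness I would extend $\mathcal{L}_\Box$ by a countable set $C$ of fresh individual constants and build a canonical model whose worlds are saturated $L$-theories in the extended language, that is, maximal $L$-consistent sets $\Gamma$ with the Henkin property that $\forall x A \notin \Gamma$ implies $A(c/x) \notin \Gamma$ for some $c$ that has been designated ``inner for $\Gamma$''. The outer domain is taken to be $C$; the canonical accessibility is the usual $\Gamma R^c \Delta$ iff $\{A : \Box A \in \Gamma\} \subseteq \Delta$; the interpretation is $(c_1,\dots,c_n) \in I^c_\Gamma(P)$ iff $P(c_1,\dots,c_n) \in \Gamma$; and $D^c_\Gamma$ is defined to be the set of constants $c$ for which $\forall x A \to A(c/x) \in \Gamma$ for every formula $A$. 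The standard truth lemma then goes through, with the $\forall$-case handled by Henkin saturation together with the defining property of $D^c_\Gamma$, and the $\Box$-case by the usual existence lemma extending $\{A : \Box A \in \Gamma\} \cup \{\neg B\}$ to a saturated successor whenever $\Box B \notin \Gamma$.

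It then remains to verify that the additional axioms force the advertised frame conditions on this canonical model. For $\cbarc \in L$ and $\Gamma R^c \Delta$, if $c \in D^c_\Gamma$, then for any $A$ the instance $\forall x \Box A \to \Box A(c/x)$ of the defining property of $D^c_\Gamma$, combined with $\cbarc$ applied to $\Box A$, yields $\forall x A' \to A'(c/x) \in \Delta$ for every $A'$; hence $c \in D^c_\Delta$. The argument for $\barc$ is dual. For $\nid$, I would show directly that the Henkin construction produces at least one inner constant per world by applying $\nid$ to any formula of the form $\forall x(A \to A)$, so $D^c_\Gamma \neq \emptyset$. Combined with $\qok$-completeness (Theorem~\ref{thm:qk}'s inner/outer analogue), this establishes the three items.

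The main obstacle I anticipate is the circularity between the Henkin property and the definition of $D^c_\Gamma$: witnesses produced by the Henkin construction must belong to $D^c_\Gamma$, but $D^c_\Gamma$ is itself determined by which constants behave as universal instantiators at $\Gamma$. I would resolve this by the standard staged enumeration of witness demands, introducing fresh constants and simultaneously declaring them inner to the appropriate world (and, crucially, in the $\cbarc$ case, propagating the inner designation along all $R^c$-successors being constructed), then proving by induction that the limit theories are saturated and that the defining property of $D^c_\Gamma$ holds exactly for the designated constants. Managing this propagation so that it harmonizes with $\cbarc$ (and with $\barc$ in the opposite direction) while keeping the Henkin extensions $L$-consistent is where the axioms genuinely bite and is the technically delicate part of the argument.
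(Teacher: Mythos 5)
The paper does not actually prove this theorem: it is recalled from Corsi \cite{Cor02} (Thms.~1.30, 1.32 and Footnote~7), and your canonical-model strategy is indeed the one used there and in Hughes--Cresswell, so the overall plan is right. But your sketch has a genuine gap at its center. The claim that ``the argument for $\barc$ is dual'' cannot stand as written. To get decreasing inner domains on the canonical model you must propagate inner-ness \emph{backward} along $R^c$: from $c \in D^c_\Delta$, a fact about the successor, to $\forall x A \to A(c/x) \in \Gamma$, a fact about the predecessor. The canonical relation only transfers information forward ($\Box B \in \Gamma$ implies $B \in \Delta$), so there is no mirror image of the $\cbarc$ computation. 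Indeed, if the dual argument worked it would prove completeness of $\qok+\barc$ alone, which the paper and Corsi (p.~1510) explicitly record as open. The constant-inner-domain case is instead handled by reworking the existence lemma: in the presence of $\barc$ one shows that the Henkin witnesses needed to saturate a successor of $\Gamma$ can be drawn from constants already inner at $\Gamma$, so that $D^c_\Delta \subseteq D^c_\Gamma$ holds \emph{by construction}, while $\cbarc$ supplies the reverse inclusion. This is not a frame-condition check on an independently built canonical model, and it genuinely needs both axioms together.

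Two smaller points. Your $\cbarc$ chain does not compose: from $\forall x \Box A \to \Box A(c/x) \in \Gamma$ and $\cbarc$ you reach $\Box\forall x A \to \Box A(c/x) \in \Gamma$, but concluding $\forall x A \to A(c/x) \in \Delta$ would require $\Box \forall x A \in \Gamma$ whenever $\forall x A \in \Delta$, which $R^c$ does not give. The correct route is to necessitate $\uio$ and apply $\cbarc$ to the outer variable, obtaining $\vdash \forall y\, \Box(\forall x A \to A(y/x))$; instantiating at $c \in D^c_\Gamma$ puts $\Box(\forall x A \to A(c/x))$ into $\Gamma$ and hence the matrix into every successor. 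Finally, fixing the outer domain as a single set $C$ from the outset clashes with the need for fresh witnesses at each successor; as the paper's Section~\ref{sec:open problems} notes, the construction naturally forces outer domains to increase, so you must either pre-partition $C$ into infinitely many infinite reserves or build with growing outer domains and flatten to their union at the end, observing that truth under a fixed assignment is unaffected by enlarging the outer domain.
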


On the other hand, completeness of $\qok + \barc$ remains open (see~\cite[p.~1510]{Cor02}).

\section{The logic $\qost$}\label{sec:qost}

The tense predicate logic we will translate $\iqc$ into is based on the well-known tense propositional logic $\st$.
We use $\boxf$ (``always in the future'') and $\boxp$ (``always in the past'') as temporal modalities. Then
$\diaf$ (``sometime in the future'') and $\diap$ (``sometime in the past'') are usual abbreviations $\neg \boxf \neg$ and $\neg \boxp \neg$.

\begin{definition}
The logic $\st$ is the least set of formulas of the tense propositional language containing all substitution instances of $\sfour$-axioms for both $\boxf$ and $\boxp$, the axiom schemes
\begin{enumerate}
\item $A \to \boxp \diaf A$
\item $A \to \boxf \diap A$
\end{enumerate}
and closed under (MP) and
\[
\begin{array}{c p{0.2cm} l p{0.8cm} c p{0.2cm} l}
\inference {A}{\boxf A} & & \mbox{$\boxf$-Necessitation (N\textsubscript{F})} & & \inference {A}{\boxp A}  & & \mbox{$\boxp$-Necessitation (N\textsubscript{P})}
\end{array}
\]
\end{definition}

Relational semantics of $\st$ consists of Kripke frames $\mathfrak{F}=(W,R)$ where $R$ is reflexive and transitive. As usual, propositional letters are interpreted as subsets of $W$, classical connectives as the corresponding set-theoretic operations on the powerset of $W$,
and for temporal modalities we set:
\begin{equation*}
\begin{array}{l c l}
w \vDash \boxf A & \text{ iff } & (\forall v \in W)(wRv \Rightarrow v \vDash A) \\
w \vDash \boxp A & \text{ iff } & (\forall v\in W)(vRw \Rightarrow v \vDash A)
\end{array}
\end{equation*}

It is well known that $\st$ is sound and complete with respect to its relational semantics. 

Let $\mathcal{L}_T$ be the bimodal predicate language obtained by extending $\mathcal{L}$ with two modalities $\boxf$ and $\boxp$.

\begin{definition}
The logic $\qst$ is the least set of formulas of $\mathcal{L}_T$ containing all substitution instances of theorems of $\st$, the axiom schemes (i) and (iii) of Definition~\ref{def:iqc},
and closed under  (MP), (Gen), (N\textsubscript{F}), and (N\textsubscript{P}).
\end{definition}

The following are temporal versions of $\cbarc$ and $\barc$:
\begin{equation*}
\begin{array}{l p{1cm} l p{0.5cm} l}
\boxf \forall x A \to \forall x \boxf A & & \mbox{converse Barcan formula for $\boxf$} & & (\cbff)\\
\forall x \boxf A \to \boxf \forall x A & & \mbox{Barcan formula for $\boxf$} & & (\bff)\\
\boxp \forall x A \to \forall x \boxp A & & \mbox{converse Barcan formula for $\boxp$} & & (\cbfp)\\
\forall x \boxp A \to \boxp \forall x A & & \mbox{Barcan formula for $\boxp$} & & (\bfp)
\end{array}
\end{equation*}
The proof that $\qk\vdash\cbarc$ (see, e.g.,~\cite[p.~88]{Kri63}) can be adapted to prove that $\qst\vdash\cbff$ and $\qst\vdash\cbfp$. It is also well known that
$\cbff$ and $\bfp$, as well as $\cbfp$ and $\bff$ are derivable from each other in any tense predicate logic.
Therefore, all four are theorems of $\qst$. 
This is reflected in the fact that $\qst$-frames have constant domains.
Indeed, $\qst$ is complete with respect to this semantics (see Section~\ref{sec:open problems}).
But this is problematic for translating $\iqc$ fully into $\qst$ since $\iqc$-frames with constant domains validate the additional axiom $\forall x (A \lor B) \to (A \lor \forall x B)$, where $x$ is not free in $A$, which is not a theorem of $\iqc$ (see, e.g.,~\cite[p.~53, Cor.~8]{Gab81}).

Consequently, we need to work with a weaker logic than $\qst$. To this end, we introduce the logic $\qost$, which weakens $\qst$ the same way $\qok$ weakens $\qk$.

\begin{definition}\label{def:qost}
The logic $\qost$ is the least set of formulas of $\mathcal{L}_T$ containing all substitution instances of theorems of $\st$, the axiom schemes
(i), (ii), (iii), (iv) of $\qok$ (see Definition~\ref{def:qok}), $\nid$, $\cbff$,
and closed under (MP), (Gen), (N\textsubscript{F}), and (N\textsubscript{P}).
\end{definition}

As follows from Proposition~\ref{prop:bfp in qost} in the Appendix, $\bfp$ is a theorem of $\qost$. In fact,
$\cbff$ and
$\bfp$ are derivable from each other and the other axioms of $\qost$.

\begin{definition}
A \emph{$\qost$-frame} is a $\qok$-frame $\mathfrak F=(W,R,D,U)$ (see Definition~\ref{def:qok frame}) with nonempty increasing inner domains whose accessibility relation is reflexive and transitive.
\end{definition}

Models and assignments
are defined as in Definition~\ref{def:model and assign for qok}.
The clauses of
when a formula $A$ of $\mathcal{L}_T$ is true in a world $w$ of a $\qost$-model $\mathfrak{M}=(\mathfrak{F}, I)$ under the assignment $\sigma$, written
$\mathfrak{M} \vDash_w^\sigma A$, are defined as in Definition~\ref{def:truth in a model for qok}, but we replace the $\Box$-clause with the following two clauses:

\begin{equation*}
\begin{array}{l c l}
\mathfrak{M} \vDash_w^\sigma \boxf B & \text{ iff } & (\forall v\in W)(wRv \Rightarrow \mathfrak{M} \vDash_v^\sigma B) \\
\mathfrak{M} \vDash_w^\sigma \boxp B & \text{ iff } & (\forall v\in W)(vRw \Rightarrow \mathfrak{M} \vDash_v^\sigma B)
\end{array}
\end{equation*}

For formulas of $\mathcal{L}_T$ we define truth in a model and validity in a frame as in Definition~\ref{def:truth and validity in qok}.

\begin{theorem}\label{prop:soundness qost}
$\qost$ is sound with respect to the class of $\qost$-frames; that is, for each formula $A$ of $\mathcal{L}_T$ and $\qost$-frame $\mathfrak{F}$, from $\qost \vdash A$ it follows that $\mathfrak{F} \vDash A$.
\end{theorem}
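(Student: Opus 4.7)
The plan is the standard induction on the length of a proof in $\qost$: verify that every axiom is valid in every $\qost$-frame and that the four inference rules preserve validity. I would split the axioms into four groups and handle each group in turn.

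First I would dispatch the propositional tense fragment. Every substitution instance of an $\st$-theorem is valid in every $\qost$-frame: fix a frame $\mathfrak F$, a model $\mathfrak M$, a world $w$, and an assignment $\sigma$; then the map sending each propositional variable $p$ to the set $\{v \in W : \mathfrak M \vDash_v^\sigma B_p\}$ (where $B_p$ is the $\mathcal{L}_T$-formula substituted for $p$) turns the clauses for $\boxf,\boxp$ in Definition~\ref{def:truth in a model for qok} (with the two additional clauses replacing the $\Box$-clause) into the propositional clauses for $\st$ on the underlying reflexive transitive frame $(W,R)$. Since $\st$ is sound with respect to reflexive transitive frames, the substitution instance holds at $w$ under $\sigma$. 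Next I would handle the Corsi axioms (i)--(iv) of $\qok$: these are validated in any $\qok$-frame by \cite{Cor02}, and $\qost$-frames are $\qok$-frames, so nothing new is needed. The $\uio$ axiom is the only slightly delicate case, since it hinges on the substitution lemma for $\mathcal{L}_T$-formulas; this is routine but should be mentioned.

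Next I would verify the two remaining axiom schemes, which encode the frame conditions. For $\nid$, suppose $\mathfrak M\vDash_w^\sigma \forall x A$ with $x$ not free in $A$; since $D_w$ is nonempty, pick $d\in D_w$ and let $\tau$ be the $x$-variant of $\sigma$ with $\tau(x)=d$. Then $\mathfrak M\vDash_w^\tau A$, and because $x$ is not free in $A$, we get $\mathfrak M\vDash_w^\sigma A$. For $\cbff$, suppose $\mathfrak M\vDash_w^\sigma \boxf\forall x A$, take an $x$-variant $\tau$ of $\sigma$ with $\tau(x)\in D_w$, and pick any $v$ with $wRv$. Since inner domains are increasing, $\tau(x)\in D_v$. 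From $\mathfrak M\vDash_v^\sigma \forall x A$ and $\tau$ being an $x$-variant of $\sigma$ with $\tau(x)\in D_v$, we conclude $\mathfrak M\vDash_v^\tau A$; hence $\mathfrak M\vDash_w^\tau\boxf A$ and thus $\mathfrak M\vDash_w^\sigma\forall x\boxf A$.

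Finally I would check the four rules. Modus ponens is immediate from the clause for $\to$. Generalization: if $\mathfrak F\vDash A$, then for any model, world $w$, and assignment $\sigma$, every $x$-variant $\tau$ of $\sigma$ with $\tau(x)\in D_w$ gives $\mathfrak M\vDash_w^\tau A$, so $\mathfrak M\vDash_w^\sigma\forall xA$. The two necessitation rules are identical in shape: if $\mathfrak F\vDash A$, then $\mathfrak M\vDash_v^\sigma A$ for every $v$ and $\sigma$, so in particular at every $R$-successor (resp. predecessor) of $w$, giving $\boxf A$ (resp. $\boxp A$) at $w$.

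The only conceptually substantive step is matching frame conditions to axioms, namely the use of nonempty inner domains for $\nid$ and of increasing inner domains for $\cbff$; everything else is either cited from \cite{Cor02} or a mechanical check. The main obstacle, if any, is a clean statement of the substitution lemma needed for $\uio$, but this is a routine induction on formulas in the two-domain semantics and can be stated once and reused.
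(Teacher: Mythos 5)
Your proposal is correct and follows essentially the same route as the paper: a direct verification that every axiom scheme is valid on $\qost$-frames and that the four rules preserve validity, with the frame conditions (nonempty and increasing inner domains) doing the work for $\nid$ and $\cbff$ exactly as in the paper's worked example. The paper only writes out the $\cbff$ case and leaves the rest as routine, whereas you fill in the remaining cases (including the reduction of $\st$-instances to propositional soundness and the appeal to Corsi for the $\qok$ axioms), all of which check out.
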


\begin{proof}
It is sufficient to show that each axiom scheme is valid in all $\qost$-frames and that each rule of inference preserves validity. This can be done by direct verification. We only
show that the axiom scheme $\cbff$ is valid in all $\qost$-frames. Let $\mathfrak{M}=(\mathfrak{F},I)$ be a $\qost$-model, $w \in W$, and $\sigma$ an assignment. If $\mathfrak{M} \vDash_w^\sigma \boxf \forall x A$, then for all $v$ with $wRv$ we have $\mathfrak{M} \vDash_v^\sigma \forall x A$. This implies that for each $x$-variant $\tau$ of $\sigma$ with $\tau(x) \in D_v$ we have $\mathfrak{M} \vDash_v^\tau A$. Since $D_w \subseteq D_v$, this is in particular true for $x$-variants $\tau$ of $\sigma$ with $\tau(x) \in D_w$. Therefore, for each $x$-variant $\tau$ of $\sigma$ with $\tau(x) \in D_w$ and for each $v$ with $wRv$ we have $\mathfrak{M} \vDash_v^\tau A$. Thus, for each $x$-variant $\tau$ of $\sigma$ with $\tau(x) \in D_w$, we have $\mathfrak{M} \vDash_w^\sigma \boxf A$. Consequently, $\mathfrak{M} \vDash_w^\sigma \forall x \boxf A$.
This shows that $\mathfrak{F} \vDash \boxf \forall x A \to \forall x \boxf A$ for each $\qost$-frame $\mathfrak{F}$.
\end{proof}

On the other hand, completeness of $\qost$
remains an interesting open problem, which is related to the open problem of completeness of $\qok+\barc$ (see Section~\ref{sec:open problems}).

\section{The translation}\label{sec:translation}

In this section we prove our main result that the temporal modification (described in the Introduction) of the G\"odel translation  
embeds $\iqc$ into $\qost$ fully and faithfully.
Our strategy is to prove faithfulness of the translation syntactically, while fullness will be proved by semantical means, utilizing Kripke completeness of $\iqc$.

Our syntactic proof of faithfulness is based on the following technical lemma, the proof of which we give in the Appendix.
To keep the notation simple,
we denote lists of variables by bold letters. If $\mathbf{x}=x_1, \ldots, x_n$, we write $\forall \mathbf{x}$ for $\forall x_1 \cdots \forall x_n$.
We point out that it is a consequence of axioms~(ii) and~(iii) of $\qok$ 
that from the point of view of provability in $\qost$, the order of variables in $\forall\mathbf{x}$ does not matter.

\begin{lemma}\label{lem:faith facts without proof}
\begin{enumerate}
\item[]
\item Let $C$ be an instance of an axiom scheme of $\iqc$ and $\mathbf{x}$ the list of free variables in $C$. Then $\qost \vdash \forall \mathbf{x} \, C^t$.
\item Let $A,B$ be formulas of $\mathcal{L}$, $\mathbf{x}$ the list of variables free in ${A \to B}$, $\mathbf{y}$ the list of variables free in $A$, and $\mathbf{z}$ the list of variables free in $B$. If $\qost \vdash \forall \mathbf{x} (A \to B)^t$ and $\qost \vdash \forall \mathbf{y} A^t$, then $\qost \vdash \forall \mathbf{z} B^t$.
\item Let $A$ be a formula of $\mathcal{L}$, $x$ a variable, $\mathbf{y}$ the list of variables free in $A$, and $\mathbf{z}$ the list of variables free in $\forall x A$. If $\qost \vdash \forall \mathbf{y} A^t$, then $\qost \vdash \forall \mathbf{z} \, (\forall x A)^t$.
\end{enumerate}
\end{lemma}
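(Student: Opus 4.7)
The three parts of this lemma correspond to the three cases --- axioms, Modus Ponens, Generalization --- in the inductive argument that $\iqc \vdash A$ implies $\qost \vdash \forall \mathbf{x} A^t$. Throughout, the main syntactic tools are the $\qost$-axioms $\uio$, $\nid$, $\cbff$, together with axioms (ii)--(iv) of $\qok$ and the $\sfour$-axioms for both $\boxf$ and $\boxp$ inside $\st$. A key preliminary, needed repeatedly in part (1), is the \emph{persistence} statement $\qost \vdash A^t \to \boxf A^t$ for every $\mathcal{L}$-formula $A$, proved by induction on $A$: the atomic, implication, and universal cases use the $\sfour$-axiom $\boxf C \to \boxf \boxf C$ (since those translations are $\boxf$-prefixed); the existential case uses the derived theorem $\diap C \to \boxf \diap C$ (obtained from the $\st$-axiom $C \to \boxf \diap C$ together with $\diap \diap C \to \diap C$) applied to $C = \exists x A^t$; conjunction and disjunction follow by the usual monotonicity of $\boxf$.

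For part (1) I would proceed axiom by axiom. $\ipc$-substitution instances, restricted to the propositional fragment, reduce to the classical G\"odel translation into $\sfour$ with $\boxf$, which is contained in $\st \subseteq \qost$, and so are theorems. For the $\ui$-axiom, combine $\boxf \forall x A^t \to \forall x A^t$ (from $\boxf$-T) with $\uio$ under $\forall y$ to obtain $\forall y(\boxf \forall x A^t \to A^t(y/x))$, then apply (N\textsubscript{F}) and $\cbff$ to obtain $\forall y \boxf(\boxf \forall x A^t \to A^t(y/x))$. For axiom (ii) of $\iqc$, derive $\forall y(A^t(y/x) \to \exists x A^t)$ as the contrapositive of the $\uio$-instance for $\neg A^t$, compose with $C \to \diap C$ (from $\boxp$-T by contraposition), and wrap with (N\textsubscript{F}) and $\cbff$ as before. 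Axioms (iii) and (iv) of $\iqc$ are the delicate cases: here I would prove the intermediate implication $\boxf \forall x \boxf(A^t \to B^t) \to (A^t \to \boxf \forall x B^t)$ (and its analogue for (iv) with $\diap \exists x A^t$ in place of $A^t$ and the side condition on $B$), then apply (N\textsubscript{F}), the $\boxf$-K-axiom, and $\boxf\boxf \leftrightarrow \boxf$ to obtain the fully $\boxf$-wrapped form. The intermediate implication is proved by using persistence to transport $A^t$ to the world where $\forall x \boxf(A^t \to B^t)$ is instantiated via $\uio$, and the side condition ``$x$ not free in $A$'' (resp.\ $B$) is exploited through axiom (iv) of $\qok$ to ignore the $x$-variant. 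Finally, the universal closures over $\mathbf{x}$ are obtained by iterated (Gen) combined with $\cbff$.

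For part (2), apply $\boxf$-T to $\forall \mathbf{x} \boxf(A^t \to B^t)$ to extract $\forall \mathbf{x}(A^t \to B^t)$, distribute using axiom (ii) of $\qok$ to get $\forall \mathbf{x} A^t \to \forall \mathbf{x} B^t$, and pad $\forall \mathbf{y} A^t$ to $\forall \mathbf{x} A^t$ using axiom (iv) of $\qok$ on the variables of $\mathbf{x} \setminus \mathbf{y}$ (which are not free in $A^t$); this yields $\forall \mathbf{x} B^t$, and the vacuous quantifiers over $\mathbf{x} \setminus \mathbf{z}$ (not free in $B^t$) are then stripped by repeated applications of $\nid$. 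Part (3) splits on whether $x$ is free in $A$: if it is, $\mathbf{y} = \mathbf{z} \cup \{x\}$ and (N\textsubscript{F}) followed by iterated $\cbff$ turns $\forall \mathbf{z} \forall x A^t$ into $\forall \mathbf{z} \boxf \forall x A^t = \forall \mathbf{z}(\forall x A)^t$; if $x$ is not free in $A$, axiom (iv) of $\qok$ first introduces the vacuous $\forall x$ and the previous case applies.

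The main obstacle will be the syntactic verification of $\iqc$ axioms (iii) and (iv) in part (1). Since $\qost$ contains neither $\bff$ nor the unweakened $\ui$, $\forall x$ and $\boxf$ cannot be freely interchanged and a universal quantifier cannot be instantiated without an enclosing $\forall y$; so the semantic argument --- which would be immediate in $\qst$ --- has to be encoded by a careful interleaving of $\cbff$, $\uio$, $\nid$, and the persistence lemma, with attention to which worlds and which variable-variants each modal or quantifier layer refers to.
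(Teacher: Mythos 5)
Your overall architecture coincides with the paper's: the persistence lemma $\qost \vdash A^t \to \boxf A^t$ is proved by exactly the induction you describe (4-axiom for the $\boxf$-prefixed cases, $\diap C \to \boxf \diap C$ for the existential case), part (1) is handled axiom scheme by axiom scheme using $\uio$, $\nid$, the Corsi-style quantifier lemmas, and the (N\textsubscript{F})-then-$\cbff$ move to push $\boxf$ under $\forall y$, and parts (2) and (3) are the same quantifier bookkeeping with axiom (iv) of $\qok$ for padding and $\nid$ for stripping vacuous quantifiers. Your treatment of $\iqc$-axiom (iii), via the intermediate implication $\boxf \forall x \boxf(A^t \to B^t) \to (A^t \to \boxf \forall x B^t)$ obtained from persistence plus the derived theorem $\forall x(A^t \to B^t) \to (A^t \to \forall x B^t)$, is line for line the paper's derivation.

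The one genuine gap is in $\iqc$-axiom (iv), which you dismiss as ``the analogue with $\diap \exists x A^t$ in place of $A^t$.'' It is not a routine analogue: to pass from $\forall x \boxf(A^t \to B^t) \to (\exists x \diap A^t \to B^t)$ (which does follow by your methods, using $\diap A^t \to A^t$) to the required $\forall x \boxf(A^t \to B^t) \to (\diap \exists x A^t \to B^t)$, you must commute $\diap$ past $\exists x$, i.e.\ prove $\qost \vdash \diap \exists x A^t \to \exists x \diap A^t$. This is (the dual of) the Barcan formula $\bfp$ for $\boxp$, which is \emph{not} an axiom of $\qost$ and none of the tools you list produces it directly. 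The paper devotes a separate proposition to deriving $\bfp$ from $\cbff$ together with the tense interaction axioms $A \to \boxp \diaf A$ and $A \to \boxf \diap A$, going through the intermediate theorem $\diaf \forall x B \to \forall x \diaf B$; semantically this is the point where ``a witness that existed in some past inner domain still lies in the present inner domain'' is encoded, and it is the most delicate derivation in the whole argument. Without identifying and proving this interchange, your case (iv) does not close. (A much smaller omission: for the degenerate instances of $\ui$ and of $A(y/x) \to \exists x A$ where $x$ is not free in $A$, one argues via $\nid$ rather than $\uio$; your toolkit covers this, but the case split should be made explicit.)
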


\begin{proof}
For (i) see the proof of Lemma~\ref{lem:faithful axiom}, for (ii) see the proof of Lemma~\ref{lem:faithful MP}, and for (iii) see the proof of Lemma~\ref{lem:faithful gen}.
\end{proof}

\begin{theorem}\label{thm: faith formula}
Let $A$ be a formula of $\mathcal{L}$ and $x_1, \ldots, x_n$ the free variables of $A$. If $\iqc \vdash A$, then $\qost \vdash \forall x_1 \cdots \forall x_n A^t$.
\end{theorem}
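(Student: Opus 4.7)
The plan is to prove the theorem by induction on the length of a derivation of $A$ in $\iqc$. The three clauses of Lemma~\ref{lem:faith facts without proof} are tailored exactly to this induction: clause~(i) handles the base case (axioms) and clauses~(ii) and~(iii) handle the two inference rules.

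For the base case, suppose $A$ is an instance of one of the axiom schemes of $\iqc$ listed in Definition~\ref{def:iqc} (or a substitution instance of an $\ipc$-theorem). Let $\mathbf{x}$ be the list of free variables of $A$. Clause~(i) of Lemma~\ref{lem:faith facts without proof} gives us $\qost \vdash \forall \mathbf{x}\, A^t$ immediately.

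For the inductive step, there are two cases according to which rule was applied last. If $A$ was obtained by (MP) from formulas $B$ and $B \to A$, write $\mathbf{y}$ for the free variables of $B$, $\mathbf{x}$ for the free variables of $B \to A$, and $\mathbf{z}$ for the free variables of $A$. The induction hypothesis gives $\qost \vdash \forall \mathbf{y}\, B^t$ and $\qost \vdash \forall \mathbf{x}\, (B \to A)^t$, so clause~(ii) of Lemma~\ref{lem:faith facts without proof} yields $\qost \vdash \forall \mathbf{z}\, A^t$. If instead $A = \forall x B$ was obtained by (Gen) from $B$, write $\mathbf{y}$ for the free variables of $B$ and $\mathbf{z}$ for the free variables of $\forall x B$; the induction hypothesis gives $\qost \vdash \forall \mathbf{y}\, B^t$ and clause~(iii) yields $\qost \vdash \forall \mathbf{z}\, (\forall x B)^t$, as required.

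All the real work has been shifted to Lemma~\ref{lem:faith facts without proof}, whose proof is deferred to the Appendix; that is where the genuine obstacle lies. The theorem itself is then a routine structural induction whose only subtlety is bookkeeping of free variables. I would remark in passing that the harmless reordering of universal quantifiers, justified by axioms~(ii) and~(iii) of $\qok$ as noted before the statement of Lemma~\ref{lem:faith facts without proof}, makes it legitimate to speak of ``$\forall \mathbf{x}$'' without fixing an order, so the matching of variable lists across the three clauses causes no difficulty.
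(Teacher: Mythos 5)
Your proposal is correct and follows exactly the same route as the paper: induction on the length of the $\iqc$-derivation, with clause~(i) of Lemma~\ref{lem:faith facts without proof} handling axiom instances and clauses~(ii) and~(iii) handling (MP) and (Gen) respectively. The paper's own proof is just a terser version of what you wrote.
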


\begin{proof}
The proof is by induction on the length of the proof of $A$ in $\iqc$.
If $A$ is an instance of an axiom of $\iqc$, then the result follows from Lemma~\ref{lem:faith facts without proof}(i).
Lemma~\ref{lem:faith facts without proof}(ii) takes care of the case in which the last step of the proof of $A$ is an application of (MP). Finally, if the last step of the proof of $A$ is an application of (Gen) to the variable $x$, use Lemma~\ref{lem:faith facts without proof}(iii).
\end{proof}

\begin{remark}\label{rem:need univ closure}
We are prefixing the translation of $A$ with $\forall x_1 \cdots \forall x_n$ because it is not true in general that $\iqc \vdash A$ implies $\qost \vdash A^t$. For example, if $A$ is an instance of the universal instantiation axiom, which is an axiom of $\iqc$, then $A^t$ is not in general
a theorem of $\qost$.
\end{remark}

\begin{definition}
\begin{itemize}
\item[]
\item For an $\iqc$-frame $\mathfrak{F}=(W,R,D)$ let $\overline{\mathfrak{F}}=(W,R,D,U)$ where $U=\bigcup \{ D_w \mid w \in W \}$.
\item For an $\iqc$-model $\mathfrak{M}=(\mathfrak{F},I)$ let $\overline{\mathfrak{M}}=(\overline{\mathfrak{F}},I)$.
\end{itemize}
\end{definition}

\begin{remark}\label{rem:mbar}
\begin{itemize}
\item[]
\item It is obvious that $\overline{\mathfrak{F}}$ is a $\qost$-frame.
\item If $I$ is an interpretation in $\mathfrak{F}$, then $I$ is also an interpretation in $\overline{\mathfrak{F}}$ because for each $n$-ary predicate letter $P$ we have $I_w(P) \subseteq D_w^n \subseteq U^n$. Therefore, $\overline{\mathfrak{M}}$ is well defined.
\item The $w$-assignments in $\mathfrak{F}$ are exactly the $w$-inner assignments in $\overline{\mathfrak{F}}$.
\end{itemize}
\end{remark}

The proof of the following technical lemma is given in the Appendix.

\begin{lemma}\label{lem:5.6}
If $A$ is a formula of $\mathcal{L}$, then $\qost \vdash A^t \to \boxf A^t$.
\end{lemma}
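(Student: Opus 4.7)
The plan is to prove the lemma by induction on the complexity of $A$, exploiting the fact that in every case except $A = \exists x B$, the formula $A^t$ either already starts with $\boxf$ or is built from connectives that commute nicely with $\boxf$. All of the propositional ingredients we need are substitution instances of theorems of $\st$ (and hence of $\qost$), in particular the $\sfour$-axiom $\boxf X \to \boxf\boxf X$, the K-axiom for $\boxf$, its consequences $\boxf X \wedge \boxf Y \to \boxf(X\wedge Y)$ and $\boxf X \vee \boxf Y \to \boxf(X\vee Y)$, the tense axiom $X \to \boxf\diap X$, and the dual $\diap\diap X \to \diap X$ of $\sfour$ for $\boxp$.

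For the base cases, if $A = \bot$ then $A^t = \bot$ and $\bot \to \boxf\bot$ is derivable by ex falso; if $A = P(x_1,\ldots,x_n)$ is atomic, then $A^t = \boxf P(x_1,\ldots,x_n)$ and $A^t \to \boxf A^t$ is a direct instance of the $\sfour$-transitivity axiom for $\boxf$. The cases $A = B \to C$ and $A = \forall x B$ are equally painless: by definition $A^t$ is itself of the form $\boxf X$, so $A^t \to \boxf A^t$ is again an instance of $\boxf X \to \boxf\boxf X$, and no appeal to the inductive hypothesis is needed.

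For the remaining propositional connectives I would use the inductive hypothesis together with normality. If $A = B \wedge C$, then the IH gives $B^t \to \boxf B^t$ and $C^t \to \boxf C^t$, whence $B^t \wedge C^t \to \boxf B^t \wedge \boxf C^t$; combining this with $\boxf B^t \wedge \boxf C^t \to \boxf(B^t \wedge C^t)$ yields $A^t \to \boxf A^t$. The disjunctive case is handled in parallel via $\boxf B^t \vee \boxf C^t \to \boxf(B^t \vee C^t)$.

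The only step requiring genuine thought is $A = \exists x B$, where $A^t = \diap\exists x B^t$, and the outer operator is $\diap$ rather than $\boxf$, so neither $\sfour$ nor the IH is directly applicable. My plan here is to set $C := \exists x B^t$ and apply the tense axiom $X \to \boxf\diap X$ with $X := \diap C$, obtaining $\diap C \to \boxf\diap\diap C$. Using $\diap\diap C \to \diap C$ (dual of the $\sfour$-axiom for $\boxp$) together with monotonicity of $\boxf$, I conclude $\diap C \to \boxf\diap C$, which is precisely $A^t \to \boxf A^t$. This interplay between the $\st$-axiom $X \to \boxf\diap X$ and transitivity of $\diap$ is the only mildly nontrivial ingredient; the rest is a routine assembly of $\sfour$ and normality principles already available in $\qost$.
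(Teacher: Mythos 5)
Your proof is correct and follows essentially the same route as the paper: induction on $A$, with the 4-axiom handling the cases where $A^t$ begins with $\boxf$, normality handling $\wedge$ and $\vee$, and the key fact $\diap C \to \boxf \diap C$ handling $\exists x B$. The only difference is that you explicitly derive $\diap C \to \boxf\diap C$ from the tense axiom $X \to \boxf\diap X$ and the dual of the $\sfour$-axiom for $\boxp$, whereas the paper simply cites it as a known $\st$-theorem.
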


\begin{proof}
See the proof of Lemma~\ref{lem:diap At to At}.
\end{proof}

\begin{lemma} \label{lem:A^t truth increasing}
Let $A$ be a formula of $\mathcal{L}$, $\mathfrak{M}=(\mathfrak{F}, I)$ a $\qost$-model, and $\sigma$ an assignment in $\mathfrak{F}$.
If $v,w \in W$ with $v R w$, then $\mathfrak{M} \vDash_v^\sigma A^t$ implies $\mathfrak{M} \vDash_w^\sigma A^t$.
\end{lemma}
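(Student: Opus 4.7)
The plan is to obtain this semantic persistence property as an immediate corollary of the preceding Lemma~\ref{lem:5.6} combined with the soundness theorem (Theorem~\ref{prop:soundness qost}) for $\qost$. Lemma~\ref{lem:5.6} supplies the syntactic fact $\qost \vdash A^t \to \boxf A^t$, and soundness upgrades this to validity of $A^t \to \boxf A^t$ in every $\qost$-frame; the $\boxf$-clause of the semantics then converts this validity into exactly the monotonicity statement we want.

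Concretely, first I would apply Theorem~\ref{prop:soundness qost} to Lemma~\ref{lem:5.6} to conclude $\mathfrak{F} \vDash A^t \to \boxf A^t$. By definition of validity in a frame, this gives $\mathfrak{M} \vDash_u^\tau A^t \to \boxf A^t$ for every world $u \in W$ and every assignment $\tau$ in $\mathfrak{F}$. Specialising to $u = v$ and $\tau = \sigma$, the hypothesis $\mathfrak{M} \vDash_v^\sigma A^t$ yields $\mathfrak{M} \vDash_v^\sigma \boxf A^t$. Unpacking the $\boxf$-clause from the semantics of $\qost$-models then gives $\mathfrak{M} \vDash_w^\sigma A^t$ for every $w \in W$ with $vRw$, which is precisely the conclusion.

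There is essentially no obstacle at this stage of the argument: the real content is already carried by Lemma~\ref{lem:5.6}, whose proof (deferred to the Appendix as Lemma~\ref{lem:diap At to At}) proceeds by induction on $A$ and uses the $\sfour$-axiom $\boxf B \to \boxf \boxf B$ to handle the $\boxf$-headed cases (atomic formulas, implication, and universal quantification), the tense mixing axiom $A \to \boxf \diap A$ to absorb the $\diap$ introduced by the existential clause of the translation, and $\cbff$ to commute $\boxf$ past $\forall x$ where needed.
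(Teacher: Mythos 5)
Your proposal is correct and takes essentially the same route as the paper's own proof: both obtain the lemma immediately by applying Theorem~\ref{prop:soundness qost} to Lemma~\ref{lem:5.6} to get $\mathfrak{M} \vDash_v^\sigma A^t \to \boxf A^t$, then conclude $\mathfrak{M} \vDash_v^\sigma \boxf A^t$ and unpack the $\boxf$-clause at $w$.
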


\begin{proof}
Suppose $vRw$ and $\mathfrak{M} \vDash_v^\sigma A^t$. By Lemma~\ref{lem:5.6} and Theorem~\ref{prop:soundness qost}, $\mathfrak{M} \vDash_v^\sigma A^t \to \boxf A^t$. Therefore,
$\mathfrak{M} \vDash_v^\sigma \boxf A^t$, which yields $\mathfrak{M} \vDash_w^\sigma A^t$ because $vRw$.
\end{proof}

\begin{proposition} \label{prop:validity and mbar}
Let $A$ be a formula of $\mathcal{L}$, $\mathfrak{M}=(\mathfrak{F}, I)$ an $\iqc$-model based on an $\iqc$-frame $\mathfrak{F}=(W,R,D)$, and $w \in W$.
\begin{enumerate}
\item For each $w$-assignment $\sigma$,
\begin{equation*}
\mathfrak{M} \vDash_{w}^\sigma A \mbox{ iff } \overline{\mathfrak{M}} \vDash_{w}^\sigma A^t.
\end{equation*}
\item If $x_1, \ldots, x_n$ are the free variables of $A$, then
\begin{equation*}
\mathfrak{M} \vDash_{w} A \mbox{ iff } \overline{\mathfrak{M}} \vDash_{w} \forall x_1 \cdots \forall x_n A^t.
\end{equation*}
\end{enumerate}
\end{proposition}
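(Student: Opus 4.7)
For~(1), I would proceed by induction on the complexity of $A$, with the $w$-assignment $\sigma$ allowed to vary. The atomic case reduces to showing that $(\sigma(x_1),\ldots,\sigma(x_n))\in I_w(P)$ iff $(\sigma(x_1),\ldots,\sigma(x_n))\in I_v(P)$ for every $v$ with $wRv$, which is exactly persistence of $I$ in an $\iqc$-frame combined with reflexivity of $R$. The conjunction and disjunction cases are routine applications of the IH. For the implication and universal cases, both the $\iqc$ clause and the $\qost$ clause after translation begin ``for every future $v$, \ldots'', so one applies the IH at each such $v$ after noting that $\sigma$ remains a $v$-assignment because $D_w\subseteq D_v$; in the universal case one further observes that the $v$-assignments that are $x$-variants of $\sigma$ are precisely the $x$-variants $\tau$ of $\sigma$ with $\tau(x)\in D_v$, which is what the $\qost$ clause for $\forall x A^t$ demands.

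The only subtle case is the existential quantifier, where the asymmetry of the translation $(\exists x B)^t=\diap\exists x B^t$ must be reconciled with the ``present-tense'' semantics of $\exists x$ in $\iqc$. The forward direction is immediate: a witness $\tau$ at $w$ itself already gives a witness at a ``past'' world, namely $u=w$, via reflexivity of $R$. The backward direction is where the new ingredient enters. Suppose there is some $u$ with $uRw$ and an $x$-variant $\tau$ of $\sigma$ with $\tau(x)\in D_u$ such that $\overline{\mathfrak{M}}\vDash_u^\tau B^t$. I would invoke Lemma~\ref{lem:A^t truth increasing} to push this forward along $uRw$, obtaining $\overline{\mathfrak{M}}\vDash_w^\tau B^t$; since the underlying $\iqc$-frame has increasing domains, $\tau(x)\in D_u\subseteq D_w$, and $\tau(y)=\sigma(y)\in D_w$ for $y\neq x$, so $\tau$ is a genuine $w$-assignment and the IH yields the required witness for $\exists x B$ in $\mathfrak{M}$. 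This is the main obstacle, and it is precisely the step that justifies the past-tense clause in the temporal translation.

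Part~(2) is then a bookkeeping exercise on top of~(1). Unpacking the prenex universal quantifiers shows that $\overline{\mathfrak{M}}\vDash_w\forall x_1\cdots\forall x_n A^t$ is equivalent to $\overline{\mathfrak{M}}\vDash_w^\tau A^t$ holding for every assignment $\tau$ with $\tau(x_i)\in D_w$ for all $i$. Since $A^t$ has the same free variables as $A$, truth at $w$ under $\tau$ depends only on the values $\tau(x_1),\ldots,\tau(x_n)$; using that $D_w$ is nonempty, any such $\tau$ can be replaced by a $w$-assignment $\tau'$ agreeing with it on $\{x_1,\ldots,x_n\}$. Part~(1) then identifies satisfaction of $A^t$ at $w$ under $\tau'$ with that of $A$ at $w$ under $\tau'$, so the two conditions $\mathfrak{M}\vDash_w A$ and $\overline{\mathfrak{M}}\vDash_w\forall x_1\cdots\forall x_n A^t$ match up.
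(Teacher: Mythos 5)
Your proposal is correct and takes essentially the same route as the paper's proof: the same induction on complexity, with reflexivity of $R$ giving the forward direction of the existential case and Lemma~\ref{lem:A^t truth increasing} together with increasing inner domains giving the backward direction. Your part~(2) is the paper's argument with the locality-of-free-variables and nonemptiness-of-$D_w$ steps made explicit.
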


\begin{proof}
(i). Induction on the complexity of $A$. Let $A$ be an atomic formula $P(x_1, \ldots, x_n)$. Since $w R v$ implies $I_w(P) \subseteq I_v(P)$ and $R$ is reflexive, we have
\begin{align*}
\mathfrak{M} \vDash_w^\sigma P(x_1, \ldots, x_n) & \mbox{ iff } (\sigma(x_1), \ldots, \sigma(x_n)) \in I_w(P) \\
& \mbox{ iff } (\forall v\in W)(w R v \Rightarrow (\sigma(x_1), \ldots, \sigma(x_n)) \in I_v(P)) \\
& \mbox{ iff } \overline{\mathfrak{M}} \vDash_w^\sigma \boxf P(x_1, \ldots, x_n) \\
& \mbox{ iff } \overline{\mathfrak{M}} \vDash_w^\sigma P(x_1, \ldots, x_n)^t
\end{align*}

The cases where $A=\bot$, $A=B \land C$, and $A=B \lor C$ are straightforward.

If $A=B \to C$, then
using the inductive hypothesis, we have
\begin{align*}
\mathfrak{M} \vDash_w^\sigma B \to C & \mbox{ iff } (\forall v\in W)(w R v \Rightarrow (\mathfrak{M} \vDash_v^\sigma B \Rightarrow \mathfrak{M} \vDash_v^\sigma C)) \\
& \mbox{ iff } (\forall v\in W)(w R v \Rightarrow (\overline{\mathfrak{M}} \vDash_v^\sigma B^t \Rightarrow \overline{\mathfrak{M}} \vDash_v^\sigma C^t)) \\
& \mbox{ iff } \overline{\mathfrak{M}} \vDash_{w}^\sigma \boxf(B^t \to C^t) \\
& \mbox{ iff } \overline{\mathfrak{M}} \vDash_{w}^\sigma (B \to C)^t
\end{align*}

If $A=\forall x B$, then
using the inductive hypothesis, we have
\begin{align*}
\mathfrak{M} \vDash_w^\sigma \forall x B & \mbox{ iff } (\forall v\in W)(w R v \Rightarrow \mbox{for each $v$-assignment $\tau$ that is} \\
& \hphantom{ iff } \mbox{an $x$-variant of $\sigma$ we have } \mathfrak{M} \vDash_v^\tau B)\\
& \mbox{ iff } (\forall v\in W)(w R v \Rightarrow \mbox{for each assignment $\tau$ that is} \\
& \hphantom{ iff } \mbox{an $x$-variant of $\sigma$ with $\tau(x) \in D_v$ we have } \overline{\mathfrak{M}} \vDash_v^\tau B^t) \\
& \mbox{ iff } \overline{\mathfrak{M}} \vDash_{w}^{\sigma} \boxf \forall x B^t\\
& \mbox{ iff } \overline{\mathfrak{M}} \vDash_{w}^{\sigma} (\forall x B)^t
\end{align*}

If $A=\exists x B$, then
using the inductive hypothesis, reflexivity of $R$, Lemma~\ref{lem:A^t truth increasing}, and the fact that $v R w$ implies $D_v \subseteq D_w$, we have
\begin{align*}
\mathfrak{M} \vDash_w^\sigma \exists x B & \mbox{ iff } \mbox{there is a $w$-assignment $\tau$ that is an $x$-variant of $\sigma$}\\
& \hphantom{ iff } \mbox{such that } \mathfrak{M} \vDash_w^\tau  B \\
& \mbox{ iff } \mbox{there is an assignment $\tau$ that is an $x$-variant of $\sigma$} \\
& \hphantom{ iff } \mbox{with } \tau(x) \in D_w \mbox{ such that } \overline{\mathfrak{M}} \vDash_w^{\tau}  B^t \\
& \mbox{ iff } \mbox{there is $v\in W$ such that $v R w$ and an assignment $\rho$ that is} \\
& \hphantom{ iff } \mbox{an $x$-variant of $\sigma$ with } \rho(x) \in D_v \mbox{ such that } \overline{\mathfrak{M}} \vDash_v^{\rho}  B^t \\
& \mbox{ iff } \overline{\mathfrak{M}} \vDash_{w}^{\sigma} \diap \exists x B^t\\
& \mbox{ iff } \overline{\mathfrak{M}} \vDash_{w}^{\sigma} (\exists x B)^t
\end{align*}

(ii). By Definition~\ref{def:truth and validity in iqc}, $\mathfrak{M} \vDash_w A$ iff $\mathfrak{M} \vDash_w^\sigma A$ for each $w$-assignment $\sigma$. As noted in Remark~\ref{rem:mbar}, $w$-assignments in $\mathfrak{F}$ are exactly the $w$-inner assignments in $\overline{\mathfrak{F}}$. Therefore, by (i), $\mathfrak{M} \vDash_w A$ iff $\overline{\mathfrak{M}} \vDash_w^\sigma A^t$ for each $w$-inner assignment $\sigma$. It follows from the interpretation of the universal quantifier in $\overline{\mathfrak{M}}$
that $\overline{\mathfrak{M}} \vDash_{w}^\sigma A^t$ for each $w$-inner assignment $\sigma$ iff $\overline{\mathfrak{M}} \vDash_{w}  \forall x_1 \cdots \forall x_n A^t$.
Thus, $\mathfrak{M} \vDash_w A$ iff $\overline{\mathfrak{M}} \vDash_{w}  \forall x_1 \cdots \forall x_n A^t$.
\end{proof}

\begin{theorem}\label{thm: full formula}
Let $A$ be a formula of $\mathcal{L}$ and $x_1, \ldots, x_n$ the free variables of $A$. If $\qost \vdash \forall x_1 \cdots \forall x_n A^t$, then $\iqc \vdash A$.
\end{theorem}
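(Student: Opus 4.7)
The plan is to prove the contrapositive semantically, exploiting Kripke completeness of $\iqc$ (Theorem~\ref{thm:sound and compl of iqc}), soundness of $\qost$ (Theorem~\ref{prop:soundness qost}), and the bridge provided by Proposition~\ref{prop:validity and mbar}. All the substantive work has already been packaged into those results, so the argument should be quite short.

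Concretely, I would suppose $\iqc \not\vdash A$ and aim to show $\qost \not\vdash \forall x_1 \cdots \forall x_n A^t$. First I invoke Theorem~\ref{thm:sound and compl of iqc} to produce an $\iqc$-frame $\mathfrak{F}=(W,R,D)$, a model $\mathfrak{M}=(\mathfrak{F},I)$, and a world $w \in W$ such that $\mathfrak{M} \not\vDash_w A$. Next, I pass to the associated structures $\overline{\mathfrak{F}}$ and $\overline{\mathfrak{M}}$: by Remark~\ref{rem:mbar}, $\overline{\mathfrak{F}}$ is a $\qost$-frame (its inner domains are nonempty and increasing, and its accessibility relation is reflexive and transitive) and $\overline{\mathfrak{M}}$ is a well-defined $\qost$-model on it. Applying Proposition~\ref{prop:validity and mbar}(ii) to the chosen $w$, I obtain $\overline{\mathfrak{M}} \not\vDash_w \forall x_1 \cdots \forall x_n A^t$, so $\overline{\mathfrak{F}} \not\vDash \forall x_1 \cdots \forall x_n A^t$.

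Finally, the contrapositive of soundness (Theorem~\ref{prop:soundness qost}) applied to the $\qost$-frame $\overline{\mathfrak{F}}$ yields $\qost \not\vdash \forall x_1 \cdots \forall x_n A^t$, completing the argument.

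There is no real obstacle to overcome here: the translation of the semantic failure at a single world in an $\iqc$-model into a semantic failure of the universal closure of $A^t$ at the same world in the associated $\qost$-model is precisely what Proposition~\ref{prop:validity and mbar} was designed to deliver, and the fact that $\overline{\mathfrak{F}}$ satisfies all the frame conditions required of a $\qost$-frame is immediate. The only point that deserves a brief mention is why prefixing $A^t$ with $\forall x_1 \cdots \forall x_n$ is exactly right: truth of $A$ at $w$ in the $\iqc$-model quantifies over all $w$-assignments in $\mathfrak{F}$, and these correspond bijectively to the $w$-inner assignments in $\overline{\mathfrak{F}}$, which is precisely what the universal quantifier in $\qost$ ranges over at $w$.
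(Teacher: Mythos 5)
Your proposal is correct and follows exactly the same route as the paper's proof: contrapose, obtain a countermodel for $A$ via Kripke completeness of $\iqc$, transfer the failure to $\overline{\mathfrak{M}}$ via Proposition~\ref{prop:validity and mbar}(ii), and conclude via soundness of $\qost$. The additional remarks on why $\overline{\mathfrak{F}}$ is a $\qost$-frame and why the universal closure is the right prefix are accurate and consistent with Remark~\ref{rem:mbar} and the paper's discussion.
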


\begin{proof}
Suppose $\iqc \nvdash A$. Theorem~\ref{thm:sound and compl of iqc} implies that there is an $\iqc$-model $\mathfrak{M}$ such that $\mathfrak{M} \nvDash_w A$ for some world $w$. By Proposition~\ref{prop:validity and mbar}(ii), $\overline{\mathfrak{M}} \nvDash_w \forall x_1 \cdots \forall x_n A^t$. Thus, $\qost \nvdash \forall x_1 \cdots \forall x_n A^t$ by Theorem~\ref{prop:soundness qost}.
\end{proof}

By putting Theorems~\ref{thm: faith formula} and~\ref{thm: full formula} together we arrive at the main result of the paper mentioned in the introduction.

\begin{theorem}
\begin{itemize}
\item[]
\item Let $A$ be a formula of $\mathcal{L}$ and $x_1, \ldots, x_n$ the free variables of $A$. We have
\[
\iqc \vdash A \mbox{ iff } \qost \vdash \forall x_1 \cdots \forall x_n A^t.
\]
\item If $A$ is a sentence of $\mathcal{L}$, then
\[
\iqc \vdash A \mbox{ iff } \qost \vdash A^t.
\]
\end{itemize}
\end{theorem}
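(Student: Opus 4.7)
The plan is to assemble this theorem directly from the two preceding results, Theorem~\ref{thm: faith formula} (faithfulness) and Theorem~\ref{thm: full formula} (fullness), each of which supplies one direction of the biconditional in the first bullet. There is essentially no new content to prove; the real work has already been absorbed into the syntactic argument for faithfulness (which rested on Lemma~\ref{lem:faith facts without proof}) and the semantic argument for fullness (which rested on Proposition~\ref{prop:validity and mbar} together with Kripke completeness of $\iqc$ from Theorem~\ref{thm:sound and compl of iqc}).

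For the first bullet, I would argue as follows. Let $A$ be a formula of $\mathcal{L}$ with free variables $x_1, \ldots, x_n$. If $\iqc \vdash A$, then Theorem~\ref{thm: faith formula} immediately gives $\qost \vdash \forall x_1 \cdots \forall x_n A^t$. Conversely, if $\qost \vdash \forall x_1 \cdots \forall x_n A^t$, then Theorem~\ref{thm: full formula} yields $\iqc \vdash A$. This establishes the biconditional.

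For the second bullet, I would observe that when $A$ is a sentence, the list of free variables $x_1, \ldots, x_n$ is empty, so the prefix $\forall x_1 \cdots \forall x_n$ is vacuous and $\forall x_1 \cdots \forall x_n A^t$ reduces syntactically to $A^t$. Substituting this identification into the first bullet yields $\iqc \vdash A$ iff $\qost \vdash A^t$, as desired.

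Since the theorem is obtained by direct combination of two already-proved results, there is no substantial obstacle here; the only thing to be mindful of is the bookkeeping that Remark~\ref{rem:need univ closure} flags, namely that prefixing by the universal closure is genuinely necessary for formulas with free variables (which is why the first bullet cannot be strengthened to $\iqc \vdash A$ iff $\qost \vdash A^t$ in general), but this subtlety is precisely what makes the sentence case in the second bullet come out cleanly with no extra quantifier prefix.
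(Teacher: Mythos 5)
Your proposal is correct and matches the paper's own proof, which likewise obtains the theorem by directly combining Theorems~\ref{thm: faith formula} and~\ref{thm: full formula}, with the sentence case following as the empty-prefix instance of the first bullet.
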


\begin{remark}\label{rem:adding constants}
If we allow constants in $\mathcal{L}$, Theorem~\ref{thm: full formula} is no longer true in its current form. Indeed, constants in $\iqc$ and $\qost$ behave like free variables and we would have the problem described in Remark~\ref{rem:need univ closure}. However, it can be adjusted as follows. Let $A$ be a formula containing free variables $x_1, \ldots, x_n$ and constants $c_1, \ldots c_m$. If $A(y_1/c_1, \ldots, y_m/c_m)$ is the formula obtained by replacing all the constants with fresh variables $y_1, \ldots, y_m$,
then $\iqc \vdash A$ iff $\qost \vdash \forall x_1 \cdots \forall x_n \forall y_1 \cdots \forall y_m A^t(y_1/c_1, \ldots, y_m/c_m)$.
\end{remark}

\section{Open problems}\label{sec:open problems}

As follows from Theorem~\ref{prop:soundness qost}, $\qost$ is sound with respect to the class of $\qost$-frames. However, its completeness remains an interesting open problem. The standard Henkin construction was modified by Hughes and Cresswell~\cite{CH96} and Corsi~\cite{Cor02} to obtain completeness of $\qok$.
If we adapt their technique to $\qost$, we obtain two relations $R_F$ and $R_P$ on the canonical model, one coming from
$\boxf$ and the other from $\boxp$.
There does not seem to be an obvious way to select an appropriate submodel in which the restrictions of these two relations are inverses of each other because the outer domains of accessible worlds are forced to increase by the construction. This problem disappears when constructing the canonical model for $\qst$
because the presence of $\bff$ and $\cbfp$ in each world
allows us to select witnesses without expanding the domains of accessible worlds, thus yielding that $\qst$ is sound and complete with respect to the class of $\qst$-frames.

The problem of completeness of $\qost$ seems to be closely related to the open problem, stated in~\cite[p.~1510]{Cor02}, of whether $\qok+\barc$ is Kripke complete. It appears that answering one of these problems could also provide an answer to the other.

One of the reviewers pointed out that another natural direction is to study the intermediate predicate logics and the corresponding extensions of $\qost$ for which our temporal translation remains full and faithful. Finally, it is worth investigating whether other tense predicate logics (such as the ones considered in~\cite{GM88})
could be used for translating $\iqc$ fully and faithfully. Some such systems admit presheaf semantics which is more general than Kripke semantics.

\appendix \section{Additional facts needed in Sections~\ref{sec:qost} and~\ref{sec:translation}}\label{sec:appendix a}

\begin{proposition}\label{prop:bfp in qost}
$\qost \vdash \bfp$.
\end{proposition}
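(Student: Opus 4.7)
I plan to derive $\qost \vdash \forall x \boxp A \to \boxp \forall x A$ by a syntactic argument combining $\cbff$ (an axiom of $\qost$) with the tense interaction axioms $A \to \boxp \diaf A$ and $A \to \boxf \diap A$ of $\st$, and the quantifier axioms of $\qok$.

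First I would collect a few auxiliary derivable facts in $\st$ to streamline the main argument: reflexivity gives $\boxf B \to B$ and $\boxp B \to B$, and hence $\boxf \boxp B \to B$; contraposition of the tense interaction axioms (with $\neg B$ in place of $B$) yields $\diap \boxf B \to B$ and $\diaf \boxp B \to B$; and transitivity of $R$ together with the axiom $B \to \boxf \diap B$ gives $\diap B \to \boxf \diap B$. With these in hand, $\boxp$-Necessitation and the normality of $\boxp$ (both available since $\boxp$ satisfies the $\sfour$-axioms in $\st$) can be used freely.

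The main derivation starts by instantiating $\cbff$ with $A$ replaced by $\boxp A$, producing $\boxf \forall x \boxp A \to \forall x \boxf \boxp A$. Using the auxiliary fact $\boxf \boxp A \to A$, lifted under $\forall x$ via (Gen) and the $\qok$-distribution axiom $\forall x(B \to C) \to (\forall x B \to \forall x C)$, this chains to $\boxf \forall x \boxp A \to \forall x A$. Applying $\boxp$-Necessitation and $\boxp$-distribution then yields
\[
\boxp \boxf \forall x \boxp A \to \boxp \forall x A.
\]

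The hard part is the final bridge from $\forall x \boxp A$ to $\boxp \forall x A$. A naive route through $\boxp \boxf \forall x \boxp A$ is unavailable, since $\forall x \boxp A \to \boxp \boxf \forall x \boxp A$ fails even semantically in $\qost$-frames (one can exhibit a counterexample with three worlds and strictly increasing inner domains). Instead, I would pass to the equivalent dual $\diap \exists x B \to \exists x \diap B$ of $\bfp$ and derive it from the contrapositive dual $\exists x \diaf C \to \diaf \exists x C$ of $\cbff$. My plan is to substitute $C := \boxp B$ in this dual, use $\diaf \boxp B \to B$ lifted under $\exists x$, and then apply the interaction axiom $B \to \boxp \diaf B$ together with the derived $\diap B \to \boxf \diap B$ to effect the required ``time-perspective shift''. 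This shift, which syntactically captures the frame-level fact that increasing inner domains in the future direction coincides with the condition needed for $\bfp$ in the past direction, is where I expect the main technical difficulty of the proof to lie.
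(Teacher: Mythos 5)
Your setup is sound as far as it goes: the auxiliary $\st$-facts you list are all derivable, the chain $\boxf\forall x \boxp A \to \forall x \boxf \boxp A \to \forall x A$ is correct, and you rightly observe that $\forall x \boxp A \to \boxp\boxf\forall x\boxp A$ is unavailable, so the naive completion fails. But the plan for the step you yourself identify as the crux does not work. The formula $\exists x \diaf C \to \diaf \exists x C$ is merely the contrapositive of the axiom $\cbff$, and with $C:=\boxp B$ it yields $\exists x \diaf\boxp B \to \diaf\exists x \boxp B$; lifting $\diaf\boxp B \to B$ under $\exists x$ only takes you from that antecedent to the weaker $\exists x B$, and neither $B \to \boxp\diaf B$ nor $\diap B \to \boxf\diap B$ then gets you to $\diap\exists x B \to \exists x \diap B$. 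Any attempt to commute $\exists x$ past $\boxp$ along the way, i.e.\ to use $\exists x \boxp D \to \boxp \exists x D$, is equivalent to $\diap\forall x E \to \forall x \diap E$, which fails on $\qost$-frames with strictly increasing inner domains, so no such bridge exists.

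What is missing is the \emph{mixed} commutation principle $\diaf \forall x B \to \forall x \diaf B$ (equivalently, in dual form, $\exists x \boxf C \to \boxf \exists x C$). This is not the dual of $\cbff$ and needs its own derivation: start from the instance $\forall x(\forall x B \to B)$ of $\uio$, necessitate and push $\boxf$ inside $\forall x$ using $\cbff$, apply the ${\sf K}$-theorem $\boxf(C\to D)\to(\diaf C \to \diaf D)$ under the quantifier, and then distribute $\forall x$. That is the first half of the paper's proof, and your proposal never invokes $\uio$ at the critical point. Once this lemma is available, the conclusion follows in exactly the shape you were aiming for: $\forall x \boxp A \to \boxp\diaf\forall x \boxp A$ by the interaction axiom, then $\diaf\forall x\boxp A \to \forall x \diaf\boxp A$ under $\boxp$ (the mixed commutation with $\boxp A$ in place of $B$), then $\diaf\boxp A \to A$ under $\boxp\forall x$. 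Since you explicitly defer the ``time-perspective shift'' as an expected difficulty, the proof is incomplete precisely where the real content lies.
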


\begin{proof}
We first show that $\qost \vdash \diaf \forall x B \to \forall x \diaf B$ for any formula $B$. We have the proof\\

\begin{tabular}{l l}
1. \quad & $\forall x (\forall x B \to B)$ \\
2. & $\forall x \boxf (\forall x B \to B)$ \\
3. & $\boxf (\forall x B \to B) \to (\diaf \forall x B \to \diaf B)$ \\
4. & $\forall x \boxf (\forall x B \to B) \to \forall x (\diaf \forall x B \to \diaf B)$ \\
5. & $\forall x (\diaf \forall x B \to \diaf B)$ \\
6. & $\forall x \diaf \forall x B \to \forall x \diaf B$ \\
7. & $\diaf \forall x B \to \forall x \diaf B$\\
\end{tabular}

\medskip
Here 1 is an instance of $\uio$;
2 is obtained from 1 by adding
$\boxf$ inside $\forall x$ by applying (N\textsubscript{F}), $\cbff$, and (MP);
3 is a substitution instance of the $K$-theorem $\boxf (C \to D) \to (\diaf C \to \diaf D)$ for $\boxf$;
4 is obtained from 3 by first adding and then distributing $\forall x$
inside the implication by applying (Gen), axiom (ii) of $\qok$, and (MP);
5 follows from 2 and 4 by (MP);
6 is obtained from 5 by distributing $\forall x$;
and 7 follows from 6 and axiom (iv) of $\qok$.\\

We now prove $\forall x \boxp A \to \boxp \forall x A$.\\

\begin{tabular}{l l}
1. \quad & $\forall x \boxp A \to \boxp \diaf \forall x \boxp A$ \\
2. & $\diaf \forall x \boxp A \to \forall x \diaf \boxp A$ \\
3. & $\boxp \diaf \forall x \boxp A \to \boxp  \forall x \diaf \boxp A$ \\
4. & $\diaf \boxp A \to A$ \\
5. & $\forall x \diaf \boxp A \to \forall x A$ \\
6. & $\boxp \forall x \diaf \boxp A \to \boxp \forall x A$ \\
7. & $\forall x \boxp A \to \boxp \forall x A$\\
\end{tabular}

\medskip
Here 1 is an instance of axiom (i) of $\st$; 2 is an instance of $\diaf \forall x B \to \forall x \diaf B$ proved above;
3 and 6 follow from 2 and 5 by adding and distributing $\boxp$ in the implication;
 4 is an instance of the $\st$-theorem $\diaf \boxp C \to C$;
 5 is obtained from 4 by adding and distributing $\forall x$;
  and 7 follows from 1, 3, and  6.
\end{proof}

\begin{lemma} \label{lem:diap At to At}
If $A$ is a formula of $\mathcal{L}$, then $\qost \vdash A^t \to \boxf A^t$ and $\qost \vdash \diap A^t \to A^t$.
\end{lemma}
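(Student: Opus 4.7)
The plan is to prove both $\qost \vdash A^t \to \boxf A^t$ and $\qost \vdash \diap A^t \to A^t$ simultaneously by induction on the complexity of $A$. The driving observation is that the translation produces only a handful of outer shapes: inspecting the clauses, $A^t$ is either $\bot$, a conjunction or disjunction of translations, a formula of the form $\boxf D$ (when $A$ is atomic, an implication, or a universally quantified formula), or a formula of the form $\diap D$ (when $A$ is existentially quantified). Thus the two statements will follow, at each inductive step, from four purely propositional $\st$-schemes applied to a $D$ which is itself a translation, together with routine K-manipulations for the propositional connectives.

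First I would establish the following four $\qost$-theorems, none of which requires the induction hypothesis:
\begin{enumerate}
\item $\boxf D \to \boxf \boxf D$, which is $\sfour$-transitivity for $\boxf$;
\item $\diap \boxf D \to \boxf D$, obtained by noting that the contrapositive of axiom~(i) of $\st$ ($A \to \boxp \diaf A$) yields the scheme $\diap \boxf B \to B$; substituting $B = \boxf D$ and composing with $\boxf D \to \boxf \boxf D$ via monotonicity of $\diap$ gives the desired implication;
\item $\diap D \to \boxf \diap D$, obtained from axiom~(ii) of $\st$ with $A = \diap D$, namely $\diap D \to \boxf \diap \diap D$, and then collapsing the inner $\diap\diap$ to $\diap$ by applying (N\textsubscript{F}) and K-distribution to $\diap \diap D \to \diap D$;
\item $\diap \diap D \to \diap D$, which is $\sfour$-transitivity for $\diap$ (the contrapositive of $\boxp D \to \boxp \boxp D$).
\end{enumerate}

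With these in hand, the four translation cases in which $A^t$ has the form $\boxf D$ or $\diap D$ reduce immediately to items (i)--(iv), and do not even invoke the induction hypothesis. The case $A = \bot$ is trivial: the forward direction is ex falso, and for the backward direction $\neg \diap \bot$ is provably $\boxp \top$, a theorem by (N\textsubscript{P}). The cases $A = B \wedge C$ and $A = B \vee C$ use the induction hypothesis on $B$ and $C$ together with standard K-theorems: $\boxf X \wedge \boxf Y \to \boxf(X \wedge Y)$ and $\boxf X \vee \boxf Y \to \boxf(X \vee Y)$ for the forward direction, and $\diap(X \wedge Y) \to \diap X \wedge \diap Y$ together with $\diap(X \vee Y) \to \diap X \vee \diap Y$ for the backward direction.

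The main subtlety is the need to prove \emph{both} implications simultaneously, since pushing $\diap$ inside a conjunction and discharging each conjunct by the induction hypothesis requires precisely the backward statement $\diap B^t \to B^t$ for each subformula $B$. A second point requiring care is the case $A = \exists x B$: here $A^t$ carries $\diap$ on top, so the forward direction $\diap D \to \boxf \diap D$ cannot be obtained from $\sfour$-transitivity of $\boxf$ alone and genuinely requires the tense-interaction axiom~(ii) of $\st$. Apart from these observations, the argument is bookkeeping in $\st$ and no quantifier-specific reasoning from $\qost$ is invoked, since every $D$ that appears inside $\boxf$ or $\diap$ is already a translation and the induction hypothesis is applied only at $\wedge$ and $\vee$ steps.
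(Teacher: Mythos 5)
Your proposal is correct: every case goes through, and the four $\st$-schemes you isolate are all derivable exactly as you describe. The difference from the paper is in how the second conjunct $\qost \vdash \diap A^t \to A^t$ is obtained. You carry it through a simultaneous induction, which forces you to handle a backward case for every shape of $A^t$ (your items (ii) and (iv), the $\boxp\top$ argument for $\bot$, and the $\diap$-distribution steps at $\wedge$ and $\vee$). The paper instead proves only $A^t \to \boxf A^t$ by induction and then derives $\diap A^t \to A^t$ in one stroke for arbitrary $A$: from $\vdash A^t \to \boxf A^t$, monotonicity of $\diap$ gives $\vdash \diap A^t \to \diap\boxf A^t$, and the $\st$-theorem $\diap\boxf B \to B$ (which you yourself derive inside your item (ii)) finishes it. So your ``main subtlety'' --- that the backward statement must be threaded through the induction --- is not actually necessary; the second statement is a uniform consequence of the first rather than an inductive companion to it. Both arguments are sound; the paper's decomposition is shorter and makes clear that the only genuinely inductive content of the lemma is the single scheme $A^t \to \boxf A^t$, while yours has the minor virtue of never needing to observe that implication between the two statements.
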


\begin{proof}
We only prove that $\qost \vdash A^t \to \boxf A^t$ since it implies that $\qost \vdash \diap A^t \to A^t$. The proof is by induction on the complexity of $A$. If $A=\bot$, then $A^t=\bot$ and it is clear that $\qost \vdash \bot \to \boxf \bot$.

If $A$ is either an atomic formula $P(x_1, \ldots, x_n)$ or of the form $B \to C$ or $\forall x B$, then $A^t$ is of the form $\boxf D$. Therefore, the $4$-axiom $\boxf D \to \boxf  \boxf D$
implies that in all these cases $\qost \vdash A^t \to \boxf A^t$.

If $A=\exists x B$, then $A^t=\diap \exists x B^t$. So $\boxf A^t= \boxf \diap \exists x B^t$ and $\qost \vdash \diap \exists x B^t \to \boxf \diap \exists x B^t$ because it is a substitution instance of
 the $\st$-theorem $\diap C \to \boxf \diap C$.
Finally, if $A=B \land C$ or $A=B \lor C$, then we have $A^t=B^t \land C^t$ or $A^t=B^t \lor C^t$. By inductive hypothesis, $\qost \vdash  B^t \to \boxf B^t$ and $\qost \vdash C^t \to \boxf C^t$. Since $\qost \vdash (\boxf B^t \land \boxf C^t) \to \boxf (B^t \land C^t)$ and $\qost \vdash (\boxf B^t \lor \boxf C^t) \to \boxf (B^t \lor  C^t)$,
we obtain $\qost \vdash (B^t \land C^t) \to \boxf(B^t \land C^t)$ and $\qost \vdash (B^t \lor C^t) \to \boxf(B^t \lor C^t)$.
\end{proof}

\begin{lemma} \label{lem:useful facts qost}
The following are theorems of $\qost$:
\begin{enumerate}
\item $\forall y (A(y/x) \to \exists x A)$.
\item $\forall x (A \to B) \to (A \to \forall x B)$ if $x$ is not free in $A$.
\item $\forall x (A \to B) \to (\exists x A \to B)$ if $x$ is not free in $B$.
\end{enumerate}
\end{lemma}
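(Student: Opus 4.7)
\emph{Approach.} The plan is to derive each part from axioms (i)--(iv) of $\qok$ in Definition~\ref{def:qok} using classical propositional reasoning; the tense axioms of $\st$ play no role, and in fact all three statements are already theorems of $\qok \subseteq \qost$. The routine tool I will use throughout is the following: if $\qost \vdash C \to D$, then $\qost \vdash \forall x\, C \to \forall x\, D$, obtained by applying (Gen) to $C \to D$ and then axiom (ii) of $\qok$ together with (MP). I will invoke this schema with $C \to D$ typically being a propositional tautology, which is available via (Gen) from classical propositional logic.

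\emph{Part (i).} I would instantiate $\uio$ with $\neg A$ in place of $A$, obtaining $\forall y(\forall x \neg A \to \neg A(y/x))$. The inner implication $\forall x \neg A \to \neg A(y/x)$ is propositionally equivalent to $A(y/x) \to \neg \forall x \neg A$ by contraposition and double negation elimination. Passing this equivalence under $\forall y$ via the routine tool yields $\forall y(A(y/x) \to \neg \forall x \neg A)$, which is exactly $\forall y(A(y/x) \to \exists x A)$ by the definition of $\exists$.

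\emph{Parts (ii) and (iii).} For (ii), instantiate axiom (ii) of $\qok$ to get $\forall x(A \to B) \to (\forall x A \to \forall x B)$. The side condition that $x$ is not free in $A$ lets me invoke axiom (iv) of $\qok$ to get $A \to \forall x A$. Chaining the two propositionally gives $\forall x(A \to B) \to (A \to \forall x B)$. For (iii), I would apply the routine tool to the propositional tautology $(A \to B) \to (\neg B \to \neg A)$ to obtain $\forall x(A \to B) \to \forall x(\neg B \to \neg A)$. Since $x$ is not free in $\neg B$, the already-proved part (ii) yields $\forall x(\neg B \to \neg A) \to (\neg B \to \forall x \neg A)$, and classical contraposition rewrites the consequent $\neg B \to \forall x \neg A$ as $\exists x A \to B$, giving $\forall x(A \to B) \to (\exists x A \to B)$.

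I do not anticipate any real obstacle; the derivations are routine first-order manipulations in which propositional steps are lifted under a $\forall$-quantifier by (Gen) and axiom (ii) of $\qok$. The only subtlety worth flagging is that (i) cannot be obtained by direct instantiation because $\qost$ contains only the weakened $\uio$ rather than the full $\ui$ of $\iqc$; this is exactly why we route through classical contraposition of the $\neg A$-instance of $\uio$ instead.
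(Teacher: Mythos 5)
Your derivations are correct, but they take a different route from the paper, which proves this lemma by a one-line citation to Corsi's Lemma 1.3 in \cite{Cor02} rather than by exhibiting any derivation. What you supply is essentially the content that citation outsources: explicit proofs from axioms (i)--(iv) of $\qok$, which are indeed all that is needed (you are right that the tense axioms play no role and that the three schemes are already derivable from the $\qok$-axioms present in $\qost$). Your ``routine tool'' is sound --- from $\vdash C \to D$, apply (Gen), then axiom (ii) of $\qok$ and (MP) --- and the propositional steps are available because $\qost$ contains all substitution instances of classical tautologies. Your treatment of (i) is the one genuinely non-obvious point, and you handle it correctly: since only $\uio$ (not $\ui$) is available, one must instantiate $\uio$ at $\neg A$ and contrapose under the quantifier, using that $(\neg A)(y/x) = \neg(A(y/x))$ and that $\exists x A$ abbreviates $\neg\forall x\neg A$; your closing remark correctly identifies why the naive instantiation route is blocked. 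Parts (ii) and (iii) are routine chains of axiom (ii), axiom (iv), and contraposition, as you say. The trade-off between the two approaches is simply self-containedness versus brevity: the paper defers to Corsi's unified treatment of $\qok$, while your version makes the appendix independent of \cite{Cor02} at the cost of a page of routine Hilbert-style manipulation.
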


\begin{proof}
Follows from \cite[Lem.~1.3]{Cor02}.
\end{proof}

\begin{lemma}\label{lem:translation of axioms}
For formulas $A,B$ of $\mathcal{L}$, the following are theorems of $\qost$.
\begin{enumerate}
\item $\boxf (\boxf \forall x A^t \to A^t)$ if $x$ is not free in $A$.
\item $\forall y \boxf (\boxf \forall x A^t \to A(y/x)^t)$.
\item $\boxf (A^t \to \diap \exists x A^t)$ if $x$ is not free in $A$.
\item $\forall y \boxf (A(y/x)^t \to \diap \exists x A^t)$.
\item $\boxf(\boxf \forall x \boxf(A^t \to B^t) \to \boxf(A^t \to \boxf \forall x B^t))$ if $x$ is not free in $A$.
\item $\boxf ( \boxf \forall x \boxf (A^t\to B^t) \to \boxf(\diap \exists x A^t \to B^t ))$ if $x$ is not free in $B$.
\end{enumerate}
\end{lemma}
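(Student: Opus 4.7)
The plan is to verify each of the six clauses by direct syntactic derivation in $\qost$. Each clause is a modally decorated version of an $\iqc$-axiom pushed through the temporal $t$-translation, so the derivation should track the usual $\iqc$ proof with the right modal housekeeping. Throughout I will rely on the modal stability of translated formulas, namely $A^t\to\boxf A^t$ and $\diap A^t\to A^t$ from Lemma~\ref{lem:diap At to At}, the weak quantifier facts in Lemma~\ref{lem:useful facts qost}, and the bridging axioms $\cbff$ and $\bfp$ (with $\bfp\in\qost$ by Proposition~\ref{prop:bfp in qost}), alongside $K$, $T$, $4$, (N\textsubscript{F}), (Gen), and distribution of $\forall x$ over implication (axiom (ii) of $\qok$).

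For (i), the hypothesis ``$x$ not free in $A$'' gives $x$ not free in $A^t$, so $\nid$ yields $\forall x A^t\to A^t$; composing with the $T$-axiom gives $\boxf\forall x A^t\to A^t$, which (N\textsubscript{F}) boxes up. For (iii), Lemma~\ref{lem:useful facts qost}(1) together with $\nid$ yields $A^t\to\exists x A^t$, and $C\to\diap C$ (the contrapositive of $T$ for $\boxp$) strengthens this to $A^t\to\diap\exists x A^t$; again (N\textsubscript{F}) finishes. The $\forall y$-prefixed versions (ii) and (iv) are obtained by starting from $\uio$ or Lemma~\ref{lem:useful facts qost}(1), upgrading the implicand inside an eventual $\boxf$ (via $T$ for $\boxf$ or reflexivity of $\boxp$), applying (N\textsubscript{F}), and then using $\cbff$ to pull $\forall y$ inside the newly introduced $\boxf$.

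Part (v), corresponding to the $\iqc$-axiom $\forall x(A\to B)\to(A\to\forall x B)$ with $x$ not free in $A$, is the main obstacle. The natural derivation reaches $\forall x\boxf(A^t\to B^t)\to(A^t\to\forall x\boxf B^t)$: from $\boxf(A^t\to B^t)\to(A^t\to\boxf B^t)$ (combine $K$ with $A^t\to\boxf A^t$), apply (Gen), distribute $\forall x$, and invoke Lemma~\ref{lem:useful facts qost}(2) since $x$ is not free in $A^t$. The obstacle is that the target has $\boxf\forall x B^t$ in place of $\forall x\boxf B^t$, and swapping them is precisely the Barcan formula $\bff$, which is not a theorem of $\qost$. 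My workaround is to first weaken $\forall x\boxf B^t$ to $\forall x B^t$ using $\boxf B^t\to B^t$ under (Gen) and distribution, and then restore a $\boxf$ in front by means of the Barcan-free identity $\boxf(A^t\to C)\to\boxf(A^t\to\boxf C)$ instantiated at $C=\forall x B^t$. This identity follows from the $4$-axiom and $A^t\to\boxf A^t$ applied inside a $K$-style distribution, without invoking any quantifier interaction. Two applications of (N\textsubscript{F}) and $K$ then assemble (v).

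For (vi), the corresponding $\iqc$-axiom is $\forall x(A\to B)\to(\exists x A\to B)$, and the key preliminary is $\qost\vdash\diap\exists x A^t\to\exists x A^t$. This splits as $\diap\exists x A^t\to\exists x\diap A^t$, which is the contrapositive of $\bfp$ applied to $\neg A^t$ (and hence requires Proposition~\ref{prop:bfp in qost}), composed with $\exists x\diap A^t\to\exists x A^t$, which is $\diap A^t\to A^t$ propagated through $\exists x$ by monotonicity. From $T$ for $\boxf$ under (Gen) and distribution, together with Lemma~\ref{lem:useful facts qost}(3) using $x$ not free in $B^t$, one gets $\forall x\boxf(A^t\to B^t)\to(\exists x A^t\to B^t)$; composing with the preliminary yields $\forall x\boxf(A^t\to B^t)\to(\diap\exists x A^t\to B^t)$, and two (N\textsubscript{F})/$K$ closures produce (vi). The only nontrivial insight here is that one must use $\bfp$, which is why this step depends on the appendix proposition rather than on the axioms of $\qost$ alone.
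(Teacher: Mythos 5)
Your proposal is correct and follows essentially the same route as the paper's proof: the same auxiliary facts (Lemma~\ref{lem:useful facts qost}, the persistence lemma $A^t\to\boxf A^t$, $\cbff$, and $\bfp$) drive each clause, your ``Barcan-free identity'' $\boxf(A^t\to C)\to\boxf(A^t\to\boxf C)$ for (v) is exactly the paper's steps of boxing, distributing, and discharging $\boxf A^t$ via persistence and the $4$-axiom, and your use of the contrapositive of $\bfp$ to get $\diap\exists x A^t\to\exists x\diap A^t$ in (vi) matches the paper. The only differences are cosmetic reorderings (e.g., detouring through $\forall x\boxf B^t$ in (v) before weakening to $\forall x B^t$).
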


\begin{proof}
Note that $x$ is free in $A$ iff it is free in $A^t$, and $A(y/x)^t=A^t(y/x)$. \\

(i). We have the proof\\

\begin{tabular}{l l}
1. \quad & $\forall x A^t \to A^t$ \\
2. & $\boxf \forall x A^t \to A^t$ \\
3. & $\boxf (\boxf \forall x A^t \to A^t)$\\
\end{tabular}

\medskip
where 1 is an instance of $\nid$ because $x$ is not free in $A^t$;
2 is obtained from 1 by applying
the T-axiom for $\boxf$; 3 is obtained from 2 by (N\textsubscript{F}).\\

(ii). We have the proof\\

\begin{tabular}{l l}
1. \quad & $\forall y (\forall x A^t \to A^t(y/x))$ \\
2. & $\forall y (\boxf \forall x A^t \to A^t(y/x))$ \\
3. & $\forall y \boxf (\boxf \forall x A^t \to A^t(y/x))$\\
\end{tabular}

\medskip
where 1 is an instance of $\uio$;
2 follows from 1 by applying
the T-axiom for $\boxf$ inside $\forall y$;
3 is obtained from 2 by introducing $\boxf$ inside $\forall y$.\\

(iii). We have the proof\\

\begin{tabular}{l l}
1. \quad & $A^t \to \exists x A^t$ \\
2. & $A^t \to \diap \exists x A^t$ \\
3. & $\boxf (A^t \to \diap \exists x A^t)$\\
\end{tabular}

\medskip
where 1 is an instance of $C \to \exists x C$, with $x$ not free in $C$, which is equivalent to $\nid$; 2 follows from 1 by the T-axiom for $\diap$; 3 is obtained from~2 by (N\textsubscript{F}).\\

(iv). We have the proof\\

\begin{tabular}{l l}
1. \quad & $\forall y (A^t(y/x) \to \exists x A^t)$ \\
2. & $\forall y (A^t(y/x) \to \diap \exists x A^t)$ \\
3. & $\forall y \boxf (A^t(y/x) \to \diap \exists x A^t)$\\
\end{tabular}

\medskip
where 1 follows from
Lemma~\ref{lem:useful facts qost}(i);
2 follows from 1 by applying the T-axiom for $\diap$
inside $\forall y$;
3 is obtained from 2 by introducing $\boxf$ inside $\forall y$.\\

(v). We have the proof\\

\begin{tabular}{l l}
1. \quad & $\forall x (A^t \to B^t) \to (A^t \to \forall x B^t)$ \\
2. & $\forall x \boxf(A^t \to B^t) \to (A^t \to \forall x B^t)$ \\
3. & $\boxf \forall x \boxf(A^t \to B^t) \to (\boxf A^t \to \boxf \forall x B^t)$ \\
4. & $\boxf \forall x \boxf(A^t \to B^t) \to (A^t \to \boxf \forall x B^t)$ \\
5. & $\boxf \forall x \boxf(A^t \to B^t) \to \boxf(A^t \to \boxf \forall x B^t)$\\
6. & $\boxf(\boxf \forall x \boxf(A^t \to B^t) \to \boxf(A^t \to \boxf \forall x B^t))$\\
\end{tabular}

\medskip
where 1 follows from
Lemma~\ref{lem:useful facts qost}(ii);
2 follows from 1 by applying the T-axiom for $\boxf$;
3 is obtained from 2 by adding and distributing $\boxf$;
4 follows from 3
by Lemma~\ref{lem:diap At to At};
5 is obtained from 4 by adding and distributing $\boxf$ and getting rid of one $\boxf$ in the antecedent using the $4$-axiom;
6 follows from 5 by (N\textsubscript{F}).\\

(vi). We have the proof\\

\begin{tabular}{l l}
1. \quad & $\forall x (A^t \to B^t) \to (\exists x A^t \to B^t)$ \\
2. & $\forall x (A^t \to B^t) \to (\exists x \diap A^t \to B^t)$ \\
3. & $\forall x \boxf (A^t \to B^t) \to (\exists x \diap A^t \to B^t)$ \\
4. & $\forall x \boxf (A^t \to B^t) \to (\diap \exists x A^t \to B^t)$\\
5. & $\boxf \forall x \boxf (A^t \to B^t) \to \boxf(\diap \exists x A^t \to B^t)$\\
6. & $\boxf (\boxf \forall x \boxf (A^t \to B^t) \to \boxf(\diap \exists x A^t \to B^t))$\\
\end{tabular}

\medskip
where 1 follows from
Lemma~\ref{lem:useful facts qost}(iii); 2 follows from 1 by
Lemma~\ref{lem:diap At to At};
3 follows from 2 by applying
the T-axiom for $\boxf$;
4 follows from 3 and
the fact that $\qost \vdash \diap \exists x A^t \to \exists x \diap A^t$
because it is a consequence of
$\bfp$; 5 is obtained from 4 by adding and distributing $\boxf$;
6 follows from 5 by (N\textsubscript{F}).
\end{proof}

\begin{lemma}\label{lem:faithful axiom}
If $C$ is an instance of an axiom scheme of $\iqc$ and $\mathbf{x}$ is the list of free variables in $C$, then $\qost \vdash \forall \mathbf{x} \, C^t$.
\end{lemma}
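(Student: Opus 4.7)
The plan is to case-split on which axiom scheme $C$ instantiates; there are five families, namely the substitution instances of $\ipc$-theorems, together with the four quantifier schemes (i)--(iv) of Definition~\ref{def:iqc}. In each case the hard work of proving the translation is already packaged by Lemma~\ref{lem:translation of axioms}, and what remains is to affix the correct outer universal closure.

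For a substitution instance of an $\ipc$-theorem, I would write $C = \varphi(A_1/p_1, \ldots, A_k/p_k)$, where $\varphi(p_1,\ldots,p_k)$ is a theorem of $\ipc$ and $A_1,\ldots,A_k$ are first-order formulas of $\mathcal{L}$. Since the G\"odel translation commutes with propositional substitution, $C^t = \varphi^t(A_1^t/p_1, \ldots, A_k^t/p_k)$. By the McKinsey--Tarski theorem $\sfour \vdash \varphi^t$, and since $\qost$ contains every substitution instance of $\sfour$-theorems for $\boxf$, we conclude $\qost \vdash C^t$. Applying (Gen) finitely many times yields $\qost \vdash \forall\mathbf{x}\,C^t$.

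For the four quantifier schemes, I would simply compute $C^t$ and match it against the corresponding clause of Lemma~\ref{lem:translation of axioms}. Writing out each case: the $\ui$-instance $\forall xA \to A(y/x)$ translates to $\boxf(\boxf\forall x A^t \to A^t(y/x))$, which is clause~(ii); the scheme $A(y/x)\to\exists x A$ translates to $\boxf(A^t(y/x)\to\diap\exists xA^t)$, clause~(iv); and the two distribution schemes translate exactly into clauses~(v) and~(vi), using the observation that $x$ is free in $A$ (resp.\ $B$) iff it is free in $A^t$ (resp.\ $B^t$), so the side conditions transfer. In the first two cases, the lemma already supplies a leading $\forall y$, so it remains to apply (Gen) only to the remaining free variables of $C$; in the latter two cases, (Gen) supplies the entire prefix $\forall\mathbf{x}$.

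The main (and rather mild) obstacle is purely bookkeeping: correctly identifying which free variables appear in $C$ and which are already quantified by Lemma~\ref{lem:translation of axioms}, and verifying that the translation preserves freeness of variables so that the side conditions of schemes~(iii)--(iv) of $\iqc$ survive translation. The substantive model-theoretic and modal content is entirely absorbed into Lemma~\ref{lem:translation of axioms}, so once that lemma is in hand the present statement reduces to shape-matching plus a few applications of (Gen).
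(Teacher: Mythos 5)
Your overall strategy is exactly the paper's: dispatch the propositional instances via faithfulness of the propositional G\"odel translation plus (Gen), and reduce each quantifier scheme of $\iqc$ to the corresponding clause of Lemma~\ref{lem:translation of axioms}, adding only the missing outer quantifiers. However, your matching of the first two schemes uses only clauses~(ii) and~(iv) of that lemma, and this leaves a class of instances uncovered. The notation $A(y/x)$ is defined only when $x$ is free in $A$; under the convention that otherwise $A(y/x)=A$, the instance degenerates to $\forall x A \to A$ (resp.\ $A \to \exists x A$). For such instances there is no $y$ to be absorbed by the leading $\forall y$ of clauses~(ii) and~(iv), and the derivations of those clauses start from $\uio$, which presupposes a genuine substitution. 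This is precisely why Lemma~\ref{lem:translation of axioms} contains the separate clauses~(i) and~(iii), proved from $\nid$; the paper's proof explicitly uses (i) when $x$ is not free in $A$ and (ii) when it is, and likewise (iii)/(iv) for the second scheme. Your argument should make the same case split.

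A second, smaller point: the claim that the translation commutes with propositional substitution, i.e.\ that $C^t = \varphi^t(A_1^t/p_1,\ldots,A_k^t/p_k)$, is not literally true. Since $p_i^t=\boxf p_i$, the formula $\varphi^t(A_1^t/p_1,\ldots,A_k^t/p_k)$ has $\boxf A_i^t$ at the leaves where $C^t$ has $A_i^t$; the two agree only up to provable equivalence in $\qost$, via Lemma~\ref{lem:5.6} ($A^t\to\boxf A^t$) together with the T-axiom for $\boxf$. This is easily repaired (and the paper is equally terse here), but it should be said. With these two fixes your proof coincides with the paper's.
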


\begin{proof}
If $C$ is an instance of a theorem of $\ipc$, then it follows from the faithfulness of the G\"odel translation in the propositional case that $C^t$ is a theorem of $\qost$ (since $\boxf$ is an $\sfour$-modality).
Applying (Gen) to each free variable of $C^t$ then yields a proof of $\forall \mathbf{x} \, C^t$ in $\qost$. Translations of the axiom schemes of Definition~\ref{def:iqc} give:
\[
\begin{array}{l l}
 (\forall x A \to A (y/x))^t & =\boxf (\boxf \forall x A^t \to A(y/x)^t) \\[1.5ex]
(A(y/x) \to \exists x A)^t & =\boxf (A(y/x)^t \to \diap \exists x A^t) \\[1.5ex]
 \multicolumn{2}{l}{(\forall x (A \to B) \to (A \to \forall x B))^t} \\[1ex]
 & =\boxf(\boxf \forall x \boxf(A^t \to B^t) \to \boxf(A^t \to \boxf \forall x B^t)) \\[1.5ex]
 \multicolumn{2}{l}{(\forall x (A \to B) \to  (\exists x A \to B))^t} \\[1ex]
 & = \boxf ( \boxf \forall x \boxf (A^t\to B^t) \to \boxf(\diap \exists x A^t \to B^t ))
\end{array}
\]
If $C$ is an instance of one of these axiom schemes, then we obtain a proof of $\forall \mathbf{x} \, C^t$ in $\qost$ by Lemma~\ref{lem:translation of axioms} and by applying (Gen) to the free variables of $C$.
More precisely, for the first axiom we use (i) of Lemma~\ref{lem:translation of axioms} when $x$ is not free in $A$ and (ii) when $x$ is free in $A$.
Similarly, for the second axiom we use (iii) or (iv) of Lemma~\ref{lem:translation of axioms}. Finally, for the third axiom we use (v) and for the fourth axiom we use (vi) of Lemma~\ref{lem:translation of axioms}.
\end{proof}

\begin{lemma}\label{lem:faithful MP}
Let $A,B$ be formulas of $\mathcal{L}$, $\mathbf{x}$ the list of variables free in ${A \to B}$, $\mathbf{y}$ the list of variables free in $A$, and $\mathbf{z}$ the list of variables free in $B$. If $\qost \vdash \forall \mathbf{x} (A \to B)^t$ and $\qost \vdash \forall \mathbf{y} A^t$, then $\qost \vdash \forall \mathbf{z} B^t$.
\end{lemma}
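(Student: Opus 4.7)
The plan is to carry out modus ponens under the ``curtain'' of universal quantifiers, exploiting the fact that $(A\to B)^t = \boxf(A^t\to B^t)$ and that, as noted before the lemma, the order of variables in a block of universal quantifiers is irrelevant for provability in $\qost$. Write $\mathbf{y}\cup\mathbf{z} = \mathbf{x}$ (every variable free in $A$ or in $B$ is free in $A\to B$, and conversely); also recall that $x$ is free in $A$ iff $x$ is free in $A^t$, so variables in $\mathbf{x}\setminus\mathbf{y}$ are not free in $A^t$ and variables in $\mathbf{x}\setminus\mathbf{z}$ are not free in $B^t$.

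First I would inflate both hypotheses to the common list $\mathbf{x}$. From $\qost\vdash\forall\mathbf{y}A^t$, applying (Gen) once per variable in $\mathbf{x}\setminus\mathbf{y}$ and reordering via axiom~(iii) of $\qok$, I obtain $\qost\vdash\forall\mathbf{x}A^t$. From $\qost\vdash\forall\mathbf{x}\boxf(A^t\to B^t)$, I would peel off the $\boxf$: the T-axiom gives $\boxf(A^t\to B^t)\to(A^t\to B^t)$, and then repeated use of (Gen) together with axiom~(ii) of $\qok$ propagates this implication inside $\forall\mathbf{x}$, yielding $\qost\vdash\forall\mathbf{x}\boxf(A^t\to B^t)\to\forall\mathbf{x}(A^t\to B^t)$; one application of (MP) delivers $\qost\vdash\forall\mathbf{x}(A^t\to B^t)$.

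Next I would distribute the quantifier block across the implication: iterating axiom~(ii) of $\qok$ gives $\qost\vdash\forall\mathbf{x}(A^t\to B^t)\to(\forall\mathbf{x}A^t\to\forall\mathbf{x}B^t)$, and (MP) twice (using $\forall\mathbf{x}A^t$ from the first step) produces $\qost\vdash\forall\mathbf{x}B^t$. Finally I would trim the extra universal quantifiers to land on $\forall\mathbf{z}B^t$: reorder $\mathbf{x}$ as $\mathbf{z},\mathbf{w}$ with $\mathbf{w}=\mathbf{x}\setminus\mathbf{z}$ (whose variables are not free in $B^t$), apply $\nid$ once per variable of $\mathbf{w}$ to get $\forall\mathbf{w}B^t\to B^t$, and push this inside $\forall\mathbf{z}$ via (Gen) and axiom~(ii), concluding $\qost\vdash\forall\mathbf{z}B^t$ by (MP).

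I do not expect a serious obstacle: the only bookkeeping point is keeping track of which variables actually occur free in $A^t$ versus $B^t$, and this is handled cleanly by the observation that the translation preserves the set of free variables, together with $\nid$ and axiom~(iv) of $\qok$ (which say that a universally quantified variable not free in the body can be harmlessly added or removed). The mild subtlety is that one cannot simply use $\ui$ to instantiate, since $\qost$ only has its weakening $\uio$; this is precisely why the argument is routed through $\nid$ when shedding vacuous quantifiers.
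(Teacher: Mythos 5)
Your proof is correct, and it reaches the conclusion by a slightly different route than the paper. The paper splits the variables into three lists ($\mathbf{u}$ free only in $A$, $\mathbf{v}$ free only in $B$, $\mathbf{w}$ free in both), keeps the outer $\boxf$ of $(A\to B)^t$ throughout, upgrades the hypothesis $\forall\mathbf{y}\,A^t$ to $\forall\mathbf{y}\,\boxf A^t$ via Lemma~\ref{lem:5.6}, and uses the derived principle $\forall x(C\to D)\to(C\to\forall x D)$ (for $x$ not free in $C$) to confine $\forall\mathbf{v}$ to the consequent before detaching; only at the very end does it strip $\boxf$ with the T-axiom and discard $\forall\mathbf{u}$ with $\nid$. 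You instead inflate both hypotheses to the common list $\mathbf{x}$ by (Gen), discard the $\boxf$ immediately with the T-axiom, perform a plain quantified modus ponens using only axiom~(ii) of $\qok$ iterated over the block, and then trim the vacuous quantifiers with $\nid$ at the end. Every quantifier manipulation you invoke (adding by (Gen), reordering by axiom~(iii), distributing over $\to$ by axiom~(ii), deleting vacuous quantifiers by $\nid$) is available in $\qost$, so the argument goes through; your version is arguably a bit leaner, as it does not need Lemma~\ref{lem:5.6} or Lemma~\ref{lem:useful facts qost}(ii), at the cost of carrying some vacuous quantifiers through the middle of the derivation. Your closing remark correctly identifies why the trimming must go through $\nid$ rather than $\ui$.
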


\begin{proof}
Let $\mathbf{u}$ be the list of variables free in $A$ but not in $B$, $\mathbf{v}$ the list of variables free in $B$ but not in $A$, and $\mathbf{w}$ the list of variables free in both $A$ and $B$. We then have that $\mathbf{x}$ is the union of $\mathbf{u}$, $\mathbf{v}$, and $\mathbf{w}$; $\mathbf{y}$ is the union of $\mathbf{u}$ and $\mathbf{w}$; and $\mathbf{z}$ is the union of $\mathbf{v}$ and $\mathbf{w}$. Thus, we want to show that if $\qost \vdash \forall \mathbf{u} \, \forall \mathbf{v} \, \forall \mathbf{w} (A \to B)^t$ and $\qost \vdash \forall \mathbf{u} \, \forall \mathbf{w} A^t$, then $\qost \vdash \forall \mathbf{v} \, \forall \mathbf{w} B^t$.
We have the proof\\

\begin{tabular}{l l}
1. \quad & $\forall \mathbf{u} \, \forall \mathbf{v} \, \forall \mathbf{w} \, \boxf (A^t \to B^t)$ \\
2. & $\forall \mathbf{u} \, \forall \mathbf{w} \, \forall \mathbf{v} \, \boxf (A^t \to B^t)$ \\
3. & $\forall \mathbf{u} \, \forall \mathbf{w} \, \forall \mathbf{v} \, (\boxf A^t \to \boxf B^t)$ \\
4. & $\forall \mathbf{u} \, \forall \mathbf{w} \, (\boxf A^t \to \forall \mathbf{v} \, \boxf B^t)$ \\
5. & $\forall \mathbf{u} \, \forall \mathbf{w} \, \boxf A^t \to \forall \mathbf{u} \, \forall \mathbf{w} \,  \forall \mathbf{v} \, \boxf B^t$ \\
6. & $\forall \mathbf{u} \, \forall \mathbf{w} A^t$ \\
7. & $\forall \mathbf{u} \, \forall \mathbf{w} \, \boxf A^t$ \\
8. & $\forall \mathbf{u} \, \forall \mathbf{w} \, \forall \mathbf{v} \, \boxf B^t$ \\
9. & $\forall \mathbf{u} \, \forall \mathbf{w} \, \forall \mathbf{v} \, B^t$ \\
10 & $\forall \mathbf{w} \, \forall \mathbf{v} \, B^t$ \\
11 & $\forall \mathbf{v} \, \forall \mathbf{w} \, B^t$\\
\end{tabular}

\medskip
where 1 and 6 are assumptions;
2 and 11 follow from 1 and 10 by switching the order of quantification;
3 is obtained from 2 by distributing $\boxf$ inside the universal quantifiers;
4 follows from
Lemma~\ref{lem:useful facts qost}(ii) because all the variables in $\mathbf{v}$ are not free in $\boxf A^t$; 5 is obtained by distributing the universal quantifiers; 7 follows from 6 by introducing $\boxf$ inside the quantifiers;
8 is obtained by (MP) from 5 and 7; 9 follows from 8 by
the T-axiom for $\boxf$;
10 follows from 9 by $\nid$ because no variable in $\mathbf{u}$ is free in $B^t$.
\end{proof}

\begin{lemma}\label{lem:faithful gen}
Let $A$ be a formula of $\mathcal{L}$, $x$ a variable, $\mathbf{y}$ the list of variables free in $A$, and $\mathbf{z}$ the list of variables free in $\forall x A$. If $\qost \vdash \forall \mathbf{y} A^t$, then $\qost \vdash \forall \mathbf{z} \, (\forall x A)^t$.
\end{lemma}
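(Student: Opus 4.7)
The plan is to reduce to the form $\forall \mathbf{z}\, \forall x\, A^t$ first, then push a $\boxf$ outside using Necessitation and pull the $\forall \mathbf{z}$ block back through $\boxf$ using $\cbff$. Since $(\forall x A)^t = \boxf \forall x A^t$ by definition, the target is exactly $\forall \mathbf{z}\, \boxf \forall x A^t$.

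\textbf{Case split.} I would distinguish the two cases according to whether $x$ occurs free in $A$. If $x$ is free in $A$, then $\mathbf{y}$ contains $x$ and $\mathbf{z}$ is $\mathbf{y}$ with $x$ removed. Using axiom (iii) of $\qok$ (permutation of $\forall$) together with axiom (ii) of $\qok$, the hypothesis $\qost \vdash \forall \mathbf{y}\, A^t$ is equivalent to $\qost \vdash \forall \mathbf{z}\, \forall x\, A^t$, as pointed out in the paragraph preceding Lemma~\ref{lem:faith facts without proof}. If $x$ is not free in $A$, then $\mathbf{y} = \mathbf{z}$ and I would apply (Gen) to the hypothesis to obtain $\qost \vdash \forall x\, \forall \mathbf{z}\, A^t$, and then reorder to $\qost \vdash \forall \mathbf{z}\, \forall x\, A^t$ by the same permutation argument.

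\textbf{Inserting the modality.} From $\qost \vdash \forall \mathbf{z}\, \forall x\, A^t$, apply (N\textsubscript{F}) to get $\qost \vdash \boxf \forall \mathbf{z}\, \forall x\, A^t$. Now apply $\cbff$ once for each variable in $\mathbf{z}$ (together with (MP)): at each step the current formula has shape $\boxf \forall z_i\, \varphi$ nested inside some outer universal quantifiers, and $\cbff$ gives $\boxf \forall z_i\, \varphi \to \forall z_i\, \boxf \varphi$. Distributing the outer $\forall$s through this implication (using axiom (ii) of $\qok$ and (Gen), exactly as done in step 2 of the proof of Proposition~\ref{prop:bfp in qost}) yields, after $|\mathbf{z}|$ many steps, $\qost \vdash \forall \mathbf{z}\, \boxf \forall x\, A^t$. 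Since $\boxf \forall x\, A^t = (\forall x A)^t$, this is the desired conclusion.

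\textbf{Main obstacle.} There is no deep obstacle; the argument is essentially syntactic bookkeeping. The only point that requires care is the reordering of universal quantifiers (which relies on axiom (iii) of $\qok$ and was flagged in the paper) and the fact that pulling $\forall \mathbf{z}$ across $\boxf$ is exactly what $\cbff$, one of the axioms of $\qost$, provides. In contrast to Lemma~\ref{lem:faithful MP}, we do not need $\nid$ or any clever handling of disjoint variable lists, since the free variables of $\forall x A$ are transparently $\mathbf{y} \setminus \{x\}$.
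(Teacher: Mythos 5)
Your proposal is correct and follows essentially the same route as the paper's proof: both reduce to $\qost \vdash \forall \mathbf{z}\, \forall x\, A^t$ by the same case split on whether $x$ is free in $A$, and then ``introduce $\boxf$ inside the quantifiers'' via (N\textsubscript{F}), $\cbff$, and (MP). You merely spell out the quantifier-permutation and $\cbff$-iteration details that the paper leaves implicit.
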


\begin{proof}
If $x$ is in $\mathbf{y}$, then
without loss of generality we may assume that $\mathbf{y}$ is $\mathbf{z}$ concatenated with $x$. Therefore,
by assumption we have $\qost \vdash \forall \mathbf{z} \, \forall x A^t$.
If $x$ is not in $\mathbf{y}$,
 then $\mathbf{y}=\mathbf{z}$.
Thus, by (Gen) for $x$ and by switching the order of quantifiers, we again obtain
$\qost \vdash \forall \mathbf{z} \, \forall x A^t$.
We can then introduce $\boxf$ inside the quantifiers to obtain
   $\qost \vdash \forall \mathbf{z} \, \boxf \forall x A^t$ which means $\qost \vdash \forall \mathbf{z} \, (\forall x A)^t$.
\end{proof}


\end{document}